 \newtheorem{definition}{Definition}
 \newtheorem{remark}{Remark}
 \newtheorem{example}{Example}
\newtheoremstyle{nonbolditalic}
  {}
  {}
  {\normalfont}
  {}
  {\itshape}
  {.}
  {.5em}
  {\thmname{#1}\thmnumber{\textit{\enskip#2}}\thmnote{#3}}
\theoremstyle{nonbolditalic}
\newtheorem{proposition}{Proposition}
\begin{document}
\captionsetup{font={footnotesize}}
\captionsetup[table]{labelformat=simple, labelsep=newline, textfont=sc, justification=centering}

\title{A Composite Decomposition Method\\ for Large-Scale Global Optimization}

\author{Maojiang Tian,
        Minyang Chen,
        Wei Du,~\IEEEmembership{Member,~IEEE},
        Yang Tang,~\IEEEmembership{Fellow,~IEEE}, 
        \\
        Yaochu Jin,~\IEEEmembership{Fellow,~IEEE},
        and Gary G. Yen,~\IEEEmembership{Fellow,~IEEE}
        
\thanks{This work was supported by National Natural Science Foundation of China (Key Program: 62136003), National Natural Science Foundation of China (62173144, 62373153), Shanghai Rising-Star Program (22QA1402400), National Natural Science Foundation of Shanghai (21ZR1416100) and Fundamental Research Funds for the Central Universities (222202417006).
(\emph{Corresponding author: Wei Du.})}
        
\thanks{M. Tian, W. Du and Y. Tang are with the Key Laboratory of Smart Manufacturing in Energy Chemical Process, Ministry of Education, East China University of Science and Technology, Shanghai 200237, China (e-mail: mjtian0618@gmail.com; duwei0203@gmail.com; yangtang@ecust.edu.cn).}
\thanks{W. Du is also with the Engineering Research Center of Process System Engineering, Ministry of Education, East China University of Science and Technology, Shanghai 200237, China.}
\thanks{M. Chen is with the Department of Computer Science and Engineering, Southern University of Science and Technology, Shenzhen, 518055, China (cmy1223605455@gmail.com).
}
\thanks{Y. Jin is with the School of Engineering, Westlake University, Hangzhou 310030, China (e-mail: jinyaochu@westlake.edu.cn).}
\thanks{G. Yen is with the School of Electrical and Computer Engineering, Oklahoma State University, Stillwater, OK 74048, USA. (e-mail: gyen@okstate.edu).}}

\maketitle

\begin{abstract}
Cooperative co-evolution (CC) algorithms, based on the divide-and-conquer strategy, have emerged as the predominant approach to solving large-scale global optimization (LSGO) problems. 
The efficiency and accuracy of the grouping stage significantly impact the performance of the optimization process. 
While the general separability grouping (GSG) method has overcome the limitation of previous differential grouping (DG) methods by enabling the decomposition of non-additively separable functions, it suffers from high computational complexity. 
To address this challenge, this article proposes a composite separability grouping (CSG) method, seamlessly integrating DG and GSG into a problem decomposition framework to utilize the strengths of both approaches.
CSG introduces a step-by-step decomposition framework that accurately decomposes various problem types using fewer computational resources. 
By sequentially identifying additively, multiplicatively and generally separable variables, CSG progressively groups non-separable variables by recursively considering the interactions between each non-separable variable and the formed non-separable groups. 
Furthermore, to enhance the efficiency and accuracy of CSG, we introduce two innovative methods: a multiplicatively separable variable detection method and a non-separable variable grouping method. 
These two methods are designed to effectively detect multiplicatively separable variables and efficiently group non-separable variables, respectively.
Extensive experimental results demonstrate that CSG achieves more accurate variable grouping with lower computational complexity compared to GSG and state-of-the-art DG series designs.
\end{abstract}

\begin{IEEEImpStatement}
Numerous challenges in the field of artificial intelligence involve a considerable number of parameters or decision variables.
These challenges often manifest as large-scale global optimization problems, such as neural architecture search and robotic path planning. 
Decomposing such problems into lower-dimensional sub-problems, utilizing the concept of divide-and-conquer, proves to be an effective strategy for addressing these challenges.
However, existing decomposition-based methods struggle to balance the efficiency and accuracy of the decomposition process.
The composite decomposition method presented in this paper achieves both accurate and efficient decomposition of large-scale optimization problems.
This method efficiently explores the structure of large-scale global optimization problems, providing a novel perspective for solving such challenges in the field of artificial intelligence.

\end{IEEEImpStatement}

\begin{IEEEkeywords}
Cooperative co-evolution (CC), large-scale global optimization, separability, differential grouping, computational resource consumption.
\end{IEEEkeywords}

\section{Introduction}

\IEEEPARstart{T}{he} class of optimization problems with hundreds to thousands of decision variables is referred to as large-scale global optimization (LSGO) problems \cite{LSGO1}.
Nowadays, LSGO problems have become prevalent in various domains of scientific research and practical engineering, such as neural network architecture search \cite{DDG}, financial portfolio optimization \cite{EO}, and air traffic flow management \cite{Traffic}, to name a few.
The ``curse of dimensionality'' presents a significant challenge when dealing with LSGO problems using traditional mathematical procedures since too many variables increase the size of the search space and result in a more complex fitness landscape, making the problem more difficult to solve.
Evolutionary algorithms (EAs) \cite{Handbook} are frequently employed to address LSGO problems because they have excellent global optimization capabilities and little dependency on the nature of the problems.

There are two types of EAs for solving LSGO problems: non-decomposition-based approaches and decomposition-based approaches \cite{Meta, LSGO1}.
The non-decomposition-based approaches optimize all decision variables simultaneously, and representative methods include local search (such as SHADE-ILS \cite{SHADE-ILS}), hybrid algorithms (like MOS \cite{MOS}) and enhanced optimizers (such as CSO \cite{CSO}).
The decomposition-based approaches, inspired by the concept of divide-and-conquer, utilize the cooperative co-evolution (CC) framework \cite{CC1, CC4} to decompose decision variables before optimization.
This approach breaks down the high-dimensional problem into several lower-dimensional subproblems, each of which is individually optimized using EAs.
The accuracy and computational resources of the decomposition phase significantly impact the performance of the CC framework \cite{DG}.
Ensuring the accuracy of the decomposition is vital to prevent correlation among the subsets. 
This ensures that the CC framework can optimize each subset independently, without being influenced by the effects of other subsets.
The allocation of fewer computational resources in the decomposition phase enables the utilization of more computational resources in the optimization phase.
In recent years, numerous decomposition-based approaches have been proposed for solving LSGO problems.

In the early studies, due to the lack of methods to detect the interaction between variables, researchers tended to utilize static decomposition methods \cite{CC1, KS} and random decomposition methods \cite{CC4}. 
However, these methods are incapable of identifying separable and non-separable variables in the problem and are only suitable for fully-separable problems.

To increase the accuracy of decomposition, researchers have applied various learning-based decomposition algorithms to explore the interactions between decision variables.
Learning-based approaches can be classified into three types based on their principles of separability detection: finite differences \cite{DG, RDG, EDG}, monotonicity detection \cite{CCVIL, FVIL, IRRG}, and the minimum points shift principle \cite{GSG, SVG}.
\begin{enumerate}
  \item 
        Finite differences: 
        The grouping methods based on the finite difference principle can accurately identify additively separable problems.
        The first grouping method based on the finite difference principle is referred to as differential grouping (DG) \cite{DG}.
        However, DG suffers from three primary drawbacks that significantly impact its grouping performance. 
        These limitations encompass the detection of interactions between variable pairs, which results in excessive consumption of computational resources; the reliance on a fixed threshold and the exclusion of indirect interactions, which lead to diminished accuracy in grouping.

        To address the computational cost of DG, recursive DG (RDG) \cite{RDG} adopts a set-set recursive interaction detection method that reduces the complexity from $\mathcal{O}(n^2)$ to $\mathcal{O}(nlogn)$.
        Efficient RDG (ERDG) \cite{ERDG} makes full use of historical information to identify interactions, which reduces redundant detection in RDG.
        DG2 \cite{DG2} and merged DG (MDG) \cite{MDG} set adaptive thresholds based on the values of the objective function to reduce the impact of rounding errors on the accuracy of the decomposition.
        Efficient adaptive DG (EADG) \cite{EADG} designs a normalized interdependency indicator without requiring a threshold to be specified.     
        Extended DG (XDG) \cite{XDG} and global DG (GDG) \cite{GDG} propose methods to detect indirect interactions, and RDG3 \cite{RDG3} analyzes the overlapping problem caused by indirect interactions and further groups the overlapping problems by setting threshold.
        In recent years, researchers have attempted to achieve more efficient grouping for overlapping problems.
        For example, graph-based deep decomposition (GDD) \cite{GDD} utilizes the minimum vertex separator to achieve deep grouping. 
        Overlapped RDG (ORDG) \cite{ORDG} extends the set-set recursive interaction detection method to overlapping problems and improves the decomposition efficiency.
        Dynamic CC (DCC) \cite{DCC} decomposes the overlapping problem dynamically based on contribution and historical information.

        Dual DG (DDG) \cite{DDG} employs logarithmic transformation to convert the multiplicatively separable function into an additively separable function. 
        Subsequently, it utilizes the finite difference principle to ascertain the multiplicatively separable variables within the original problem.
        
        In summary, the DG series methods have achieved good results in dealing with additive separability problems.
        Research in this field is extensive and comprehensive.
        \vspace{0.2cm}
  \item 
        Monotonicity detection: The monotonicity detection examines the interaction between two variables based on whether the global optimum of a function can be reached by successive line search along the axes \cite{CCVIL}.
        
        CC with variable interdependence learning (CCVIL) \cite{CCVIL} is the first attempt to apply the monotonicity detection principle.
        Fast variable interdependence learning (FVIL) \cite{FVIL} simultaneously detects the interaction between two sets of variables similar to RDG does, significantly reducing computational resource consumption. 
        Incremental recursive ranking grouping (IRRG) \cite{IRRG} creates two sample point rankings to perform monotonicity tests effectively utilizing sampling information.
        
        Although the monotonicity test can decompose various types of separable problems to some extent, it may overlook interactions between non-separable variables and cannot ensure the accuracy of the decomposition.
        Additionally, there is currently no proven theory to demonstrate the accuracy of identifying the variable separability through the monotonicity detection principle.
\vspace{0.2cm}
  \item 
        Minimum points shift principle: This principle is the latest development in the problem decomposition approach based on the definition of separability.
        Unlike the previous two classes of methods, the decomposition approach based on the minimum points shift principle accurately decomposes both additively and non-additively separable problems.
        
        This type of decomposition method consists of two stages: minimum points search and interaction detection \cite{GSG}.
        In the first stage, independent minimum points are identified for all variables.
        In the second stage, the interaction of each variable with the remaining variables is detected by examining the minimum point.
        
        Surrogate-assisted variable grouping (SVG) \cite{SVG} seeks the global optimum of each variable using two-layer polynomial regression (TLPR) surrogate structure and the BFGS method.
        This approach requires a large number of fitness evaluations (FEs) for each variable in the global optimum search, resulting in significant computational resource consumption. 

        GSG introduces the concepts of strong separability and weak separability by studying the fitness landscape of various types of functions on their subspaces.
        This analysis highlights the significance of strong separability in detecting separability and facilitating decomposition-based optimization.
        GSG then presents a series of definitions for interaction and establishes the principle of minimum points shift.
        By utilizing local minimum points instead of global minimum points, GSG efficiently identifies variable separability, resulting in significantly reduced computational requirements compared to SVG.
        
        Although these decomposition methods are theoretically proven to be comprehensive and capable of decomposing all kinds of separable functions, they generally suffer from high computational complexity.

\vspace{0.2cm}
\end{enumerate}

In summary, the DG series methods have effectively showcased their ability to decompose additively separable problems. 
However, a notable drawback of the DG series methods is its limitation to only decompose additively and multiplicatively separable problems, lacking the capability to identify generally separable variables within the problems. 
In contrast, GSG introduces the principle of minimum points shift, enabling accurate detection of all separable variables in the problems.
Nevertheless, GSG's approach involves multiple function evaluations to progressively narrow down the search interval and locate precise minimum value points for each dimensional variable. 
Consequently, this leads to a significantly high computational complexity.

Based on the above discussions, we propose a novel decomposition method called composite separability grouping (CSG) in this article.
CSG utilizes the strengths of DG, which efficiently detects additively separable, and GSG, which identifies all types of separable variables, developing a composite decomposition framework.
In CSG, the first step involves identifying additively separable variables using the DG method and detecting multiplicatively separable variables using a novel method called multiplicatively separable variable detection (MSVD) proposed in this article.
Subsequently, the remaining variables are subjected to a search for minimum points to detect other separable variables.
During this stage, CSG thoroughly examines the interaction between each detected variable and all other variables. 
This one-to-all detection method ensures reduced computational resource consumption.
Finally, the non-separable variables are grouped based on their interactions using a novel method called non-separable variable grouping (NVG).
For problems with multiple types of separable variables in LSGO, CSG achieves a high level of decomposition accuracy while maintaining low computational complexity.
The main contributions of this work are summarized as follows:

\begin{itemize}
    \item We have developed a composite decomposition method that combines the strengths of both DG and GSG methods. 
    This approach allows a step-by-step decomposition of the problem, resulting in accurate identification of all types of separable variables and non-separable variable groups, while significantly reducing the computational resources required.
    
    \item In order to enhance the integration of DG and GSG, we have devised a new method for detecting multiplicatively separable variables, as well as a grouping method for non-separable variables. 
    These novel approaches significantly improve the accuracy and efficiency of CSG.
    
    \item We have introduced a novel benchmark that encompasses multiple types of separable variables. 
    CSG has been evaluated against GSG and state-of-the-art DG methods using this novel benchmark and classical CEC and BNS benchmarks.
    The experimental results demonstrate that CSG achieves more accurate problem decomposition with lower computational complexity.
\end{itemize}

The remaining article proceeds as follows.
Section \ref{section RELATED WORK} introduces the related definitions of separability and methods of problem decomposition.
Section \ref{section proposed} describes the proposed CSG in detail.
Next, the proposed benchmark and experimental results are presented in Section \ref{section Experimental}.
Finally, Section \ref{section Conclusion} provides the conclusion of this article and outlines future research directions.

\section{Background}\label{section RELATED WORK}
In this section, we will summarize the definitions of separability mentioned in the previous papers \cite{DG, DDG, GSG} and provide a detailed description of the related types of separability and separability detection methods.\subsection{Basic Definitions}
Separability serves as the foundation for decomposing challenging LSGO problems. 
For the completeness of the presentation, this section will introduce the relevant definitions.
\begin{definition}\label{defn:Separability}
\emph{(Separability)}\\
A problem $f(\bm{x})$ is separable if :
\begin{eqnarray}
\begin{split}
  \underset { {x_{1},...,x_{n}}}  {\text{arg min}} f({\bm{x}}) =(\underset{\bm{X_{1}}} {\text{arg min}} f({\bm{X_{1}}}),...,\underset{\bm{X_{k}}} {\text{arg min}} f({\bm{X_{k}}})) 
\end{split}
\end{eqnarray}
where $\bm{x}$ is a vector of {n} decision variables $(x_{1},...,x_{n})$, and $\bm{X_{1}}$,...,$\bm{X_{k}}$ are disjoint subcomponents of $\bm{x}$ which are called non-separable variable group. 
$k$ is denoted as the number of subcomponents.
When $k = n$, $f(\bm{x})$ is a fully separable problem.
Otherwise, $f(\bm{x})$ is a partially separable function.
\end{definition}
\begin{remark}
A problem is considered separable when one or more variables can be optimized independently, without considering the influence of other variables. 
In such cases, the other variables are typically fixed using context vectors. 
The variables involved in a separable function are referred to as ``separable variables'' in this study.
\end{remark}
\begin{remark}
There exist various types of separable functions, which we collectively refer to as generally separable functions.
Additively separable, multiplicatively separable, and composite separable functions are among the few notable examples of this class of separable problems.
\end{remark}

\begin{definition}\label{Additive Separability}
\emph{(Additive Separability)}\\
A problem $f(\boldsymbol{x})$ is additively separable if:
\begin{eqnarray}
\begin{split}
f(\boldsymbol{x})=\sum_{i=1}^{k} f_{i}\left(\boldsymbol{X}_{i}\right), k=2, \ldots, n
\end{split}
\end{eqnarray}
where $\bm{x}$ is a vector of $n$ decision variables $(x_{1},...,x_{n})$, and $\bm{X_{1}}$,...,$\bm{X_{k}}$ are non-separable disjoint subcomponents of $\bm{x}$. 
$k$ is denoted as the number of subcomponents.
When k = n, $f(\bm{x})$ is a fully additively separable problem.
\end{definition}
The additively separable problem is the most extensively studied class of separability.
Many benchmark functions are designed based on additively separable functions \cite{CEC2010, CEC2013}.
A series of methods, utilizing the principle of finite difference, has been developed to detect additively separable functions.

\begin{definition}\label{Multiplicative Separability}
\emph{(Multiplicative Separability)}\\
A problem $f(\bm{x})$ is multiplicatively separable if it satisfies the following two conditions:
\begin{enumerate}
  \item $f(\boldsymbol{x})=\prod_{i=1}^{m} f_{i}\left(\boldsymbol{X}_{i}\right) $;
  \item $f_{i}\left(\boldsymbol{X}_{i}\right) \geq 0 \ or \ f_{i}\left(\boldsymbol{X}_{i}\right)<0, i=1,2, \ldots, m$,
\end{enumerate}
where $\bm{x}$ is a vector of $n$ decision variables $(x_{1},...,x_{n})$, and $\bm{X_{1}}$,...,$\bm{X_{k}}$ are non-separable disjoint subcomponents of $\bm{x}$.
$k$ is denoted as the number of subcomponents.
When $k = n$, $f(\bm{x})$ is fully multiplicatively separable problem.
\end{definition}
The concept of multiplicative separability in LSGO was initially proposed by DDG \cite{DDG}.
GSG \cite{GSG} further enhances the theory of multiplicative separability and provides relevant theoretical proof.
In a multiplicatively separable function, the fitness function value of the subsets cannot be 0; otherwise, the function will lose its separability.

\begin{example} \label{MSV}
An example of a separable function is $f(\bm{x})=x_{1}+\left(x_{2}^{2}+1\right)\cdot\left(x_{3}^{2}+1\right),\ \textbf{x}\in \mathbb{R}^3$.
According to the definition of additive separability, it can be decomposed into $\boldsymbol{x}_1=\{x_1\}, \boldsymbol{x}_2=\left\{x_{2}, x_{3}\right\}$ and $f_1(\boldsymbol{x}_1)=x_1, f_2(\boldsymbol{x}_2)=\left(x_{2}^{2}+1\right)\cdot\left(x_{3}^{2}+1\right)$.
It is evident that $f_2(\boldsymbol{x}_2)$ is a multiplicatively separable function and can be further decomposed. 
$\boldsymbol{x}_{2}=\left\{x_{2}, x_{3}\right\}$ can be decomposed into $\boldsymbol{x}_{21}=\{x_2\}$ and $\boldsymbol{x}_{22}=\{x_3\}$, and $f_2(\boldsymbol{x}_2)=\left(x_{2}^{2}+1\right)\cdot\left(x_{3}^{2}+1\right)$ can be decomposed into $f_{21}\left(\boldsymbol{x}_{21}\right)=\left(x_{2}^{2}+1\right)$ and $f_{22}\left(\boldsymbol{x}_{22}\right)=\left(x_{3}^{2}+1\right)$.
Therefore, a separable function may exhibit multiple types of separability. 
The subfunctions of a separable function can contain various separable types and require a step-by-step decomposition process.
\end{example}

\begin{definition}\label{Composite Separability}
\emph{(Composite Separability)}\\
If $f(\bm{x})$ is a composite function with an inner function $g(\bm{x})$ and an outer function $H(\cdot)$, and the composite function satisfies the following two conditions:
\begin{enumerate}
  \item $g(\textbf{x})$ is a separable function;
  \item $H(\cdot)$ is monotonically increasing in its domain,
\end{enumerate}
then $f(\bm{x})$ is considered compositely separable.
We can optimize each compositely separable variable (or group) of $f(\bm{x})$ individually without considering the interference of other variables (or groups).
\end{definition}
\begin{example}
An illustration of a compositely separable function is $f(\boldsymbol{x})=\sqrt{x_1^2+x_2^2}$.
In this function, $x_1$ and $x_2$ can be optimized independently.
The minimum value obtained by optimizing the inner function $g(\bm{x})={x_1^2+x_2^2}$ corresponds to the minimum value of the outer function $H(\cdot)$.
\end{example}

\subsection{Problem Decomposition Methods}\label{Section:Problem Decomposition Methods}
The learning-based decomposition methods exhibit better performance than static decomposition and random decomposition \cite{DG, CCVIL, GSG}.
The learning-based decomposition methods include monotonicity checking, differential grouping, and the minimum points shift principle. 
These last two methods have complete theoretical proof and are closely related to the method proposed in this article. 
Consequently, we will provide a detailed description of them.
\subsubsection{Differential Grouping}
The DG method identifies variable interaction by detecting changes in the fitness function values when perturbing the variables.
Consider a function $f(\boldsymbol{x})$ that is additively separable. DG determines that two variables interact if the following condition is satisfied,
\begin{eqnarray}
\begin{split}
\left.\Delta_{\delta, {x}_{p}}[f](\boldsymbol{x})\right|_{{x}_{p}=a, {x}_{q}=b_{1}} \neq\left.\Delta_{\delta, {x}_{p}}[f](\boldsymbol{x})\right|_{{x}_{p}=a, {x}_{q}=b_{2}}
\end{split}
\end{eqnarray}
where
\begin{eqnarray}
\begin{split}
{\Delta_{\delta, {x}_{p}}[f](\boldsymbol{x})=f\left(\ldots, {x}_{p}+\delta, \ldots\right)-f\left(\ldots, {x}_{p}, \ldots\right)}
\end{split}
\end{eqnarray}

DG checks all variables in pairs to determine if they interact.
If two variables interact, they are placed into the same non-separable variable group.
A variable is considered separable if it does not interact with any other variables in the problem.
The computational complexity of DG is excessively high.

To address the aforementioned problems, RDG is employed, which examines the interaction between decision variables using nonlinearity detection and binary search.
Let $\bm{X_{1}}$ and $\bm{X_{2}}$ represent two disjoint groups of variables that are subsets of $\bm{X}$.
Suppose there exist two unit vectors, $\bm{u}_{1} \in U_{\bm{X_{1}}}$ and $\bm{u}_{2} \in U_{\bm{X_{2}}}$, and two real numbers $l_{1}, l_{2}>0$, along with a candidate solution $\bm{x}^{*}$ in the decision space, such that
\begin{eqnarray}
\begin{split}
\begin{aligned}
f\left(\bm{x}^{*}+l_{1} \bm{u}_{1}+l_{2} \bm{u}_{2}\right)- & f\left(\bm{x}^{*}{+}l_{2} \bm{u}_{2}\right)  \\
{\neq} & f\left(\bm{x}^{*}+l_{1} \bm{u}_{1}\right)-f\left(\bm{x}^{*}\right)
\end{aligned}
\end{split}
\end{eqnarray}
$\bm{X_{1}}$ interacts with $\bm{X_{2}}$, and there exist variables in groups $\bm{X_{1}}$ and $\bm{X_{2}}$ that belong to the same group of non-separable variable group.
It can be noticed that RDG only requires a single one-to-all interaction check to determine whether a particular variable $x$ is fully separable.

DDG establishes a mathematical relationship between additively and multiplicatively separable functions and deduces how they can be converted into each other.
For the multiplicatively separable function $f(\bm{x})$ mentioned in Definition \ref{Multiplicative Separability}, if the minimum value of the function is greater than 0, it can be transformed into an additively separable function using the logarithmic function. The relevant procedure is demonstrated below:
\begin{eqnarray}
\begin{split}
\begin{aligned}
g(\bm{x}) & =\ln f(\bm{x}) \\
& =\ln \prod_{i=1}^{k} f_{i}\left(\bm{X}_{i}\right) \\
& =\sum_{i=1}^{k} \ln f_{i}\left(\bm{X}_{i}\right), 1<k \leq n
\end{aligned}
\end{split}
\end{eqnarray}
According to Definition 2, $g(\bm{x})$ is an additively separable function. Therefore, we can decompose it using DG series methods. 
Consequently, DDG determines that the two variables in the function $f(\bm{x})$ are multiplicatively separable if the following condition holds:
\begin{equation} \label{equ8}
\begin{aligned}
&\left. \Delta_{\delta, x_{p}} \ln \left\{ [f](\bm{x}) \right\} \right|_{x_{p}=a, x_{q}=b_{1}} \\
&\quad \quad \quad \quad \quad \quad \quad \quad \neq \left. \Delta_{\delta, x_{p}} \ln \left\{ [f](\bm{x}) \right\} \right|_{x_{p}=a, x_{q}=b_{2}}
\end{aligned}
\end{equation}
where 
\begin{eqnarray} \label{equ9}
\begin{aligned}
&\Delta_{\delta, x_{p}} \ln \{ [f](\bm{x}) \} \\
&\quad \quad \quad = \ln [f\left(\ldots, x_{p}+\delta, \ldots\right)] - \ln [f\left(\ldots, x_{p}, \ldots\right)]
\end{aligned}
\end{eqnarray}
The time complexity of DDG is $\mathcal{O}(n^2)$, which is the same as DG, where $n$ represents the number of variables or the size of the problem.

However, DDG is limited to decomposing multiplicatively separable functions and may not be able to identify the multiplicatively separable variables in functions such as the one given in Example \ref{MSV}.
This limitation arises because a logarithmic transformation cannot convert non-multiplicatively separable functions into additively separable ones. 
Consequently, DDG may not effectively decompose certain functions when their subfunctions contain multiplicatively separable variables.

\subsubsection{Minimum Points Shift Principle}
According to the definition of separability, the condition for two variables, $x_{i}$ and $x_{j}$, to be separable is whether their optimal values are independent.
This condition can be detected using the following method: if the target variable, $x_{i}$, and the undetected variable, $x_{j}$, are separable, then the global optimal value of $x_{i}$ remains unchanged even after perturbing $x_{j}$.
To avoid repeatedly detecting the optimal value of $x_{i}$, the above detection method can be transformed into detecting whether the previous optimal value of $x_{i}$ remains unchanged after perturbing the $x_{j}$.

Let $f(\bm{x})$ be a $D$-dimensional function, and let $x_{i}^{*}$ represent the global minimum of the variable $x_{i}$.
$\delta$ represents the smallest perturbation, and $\emph{\textbf{cv}}$ denotes the context vector in the decision space $\mathbb{R}^{D}$.
The target variable $x_{i}$ is separable from other variables $\bm{X_{i}}$ if the following condition is met:
\begin{eqnarray}
\begin{split}
\begin{aligned}
\min \left\{f\left(\emph{\textbf{cv}} \mid \leftarrow x_{i}^{*}-\delta, \bm{X_{i}}\right), f\left(\emph{\textbf{cv}} \mid \leftarrow x_{i}^{*}+\delta, \bm{X_{i}}\right)\right\} \\ >{f\left(\emph{\textbf{cv}} \mid \leftarrow x_{i}^{*}, \bm{X_{i}}\right)}
\end{aligned}
\end{split}
\end{eqnarray}
However, searching for the global optimum value of a variable in the entire decision space is extremely challenging and can consume a significant amount of computational resources.

GSG \cite{GSG} demonstrates that the local minimum point can serve as a substitute for the global minimum point in separability detection for strongly separable functions, thus reducing the search difficulty compared to SVG \cite{SVG}. 

However, GSG also incurs a high computational cost, requiring an average of over $5 \times 10^4$ FEs to decompose 1000-D BNS and CEC problems \cite{GSG}, which is several times more than state-of-the-art DG series methods. 
According to \cite{GSG}, approximately 70\% of the total computational resources in BNS \cite{GSG} and CEC \cite{CEC2010, CEC2013} problems are consumed by searching for the minimum points. 
Upon analyzing the decomposition principle of GSG, it becomes evident that the same approach is applied to different types of variable, including searching for the minimum point of each variable and conducting one-to-all interaction detection for each variable. 
Consequently, GSG may expend significant computational resources to identify variables that are relatively easy to detect.
Although GSG, based on the minimum points shift principle, achieves high decomposition accuracy, finding ways to reduce its computational resource consumption has become a pressing challenge.

\section{The Proposed CSG}\label{section proposed}
To address the challenge of excessive computational resource consumption associated with the GSG method, we propose a novel approach called CSG. 
This section provides a detailed overview of the specific components of CSG.

Firstly, we introduce an algorithmic framework for CSG that efficiently decomposes the currently proposed separable variable types and groups non-separable variables in LSGO problems. 
Next, we present a more generalized decomposition method specifically designed to identify multiplicatively separable variables. 
This method incorporates historic information obtained during the detection of additively separable variables, enabling accurate and efficient identification of multiplicatively separable variables.
Furthermore, within the framework, we propose a recursive detection method based on non-separable variable groups.
This method efficiently groups non-separable variables.
Lastly, we analyze the theoretical computational complexity of CSG, providing insights into the computational requirements associated with implementing the proposed method.

\subsection{The Framework of CSG}
CSG consists of four stages: additively separable variable identification, multiplicatively separable variable identification, generally separable variable identification, and non-separable variable grouping. 
In this paper, when we mention generally separable variables, we are specifically referring to non-additive and non-multiplicative separable variables.
The aim of the first three stages is to identify all separable variables by checking the separability of variables dimension by dimension.
The last stage focuses on grouping the remaining non-separable variables.

The framework of CSG is presented in Algorithm \ref{algorithm_CSG}.
After initialization (Lines 1-2), the corresponding fitness function values are recorded when $\emph{\textbf{x}}$ takes the upper bounds $\emph{\textbf{x}}_{u,u}$ and lower bounds  $\emph{\textbf{x}}_{l,l}$ (Line 3). 
These values are stored for later detection to avoid repeated evaluations.
CSG examines whether each variable is additively separable using the finite difference principle (Lines 5-8). 
If there is no interaction between the current variable $x_i$ and the remaining variables $\{x_1,...,x_{i-1},x_{i+1},...,x_D  \}$, the variable $x_i$ is considered additively separable and is placed into $S_1$ (Line 10). 
Otherwise, the current variable $x_i$ continues to be examined for multiplicatively separable detection (Line 12).

\begin{algorithm}[!h]
\small
\caption{CSG}
\label{algorithm_CSG}
\begin{algorithmic}[1]
\Require{\emph{f}, $D$, $S$ (all variables), \emph{\textbf{ub}}, \emph{\textbf{lb}}, $\epsilon$ (precision of golden section search), $\alpha$ (scale factor of initial detection step)}
\Ensure{$S_1$ (additively separable variables), $S_2$ (multiplicatively separable variables), $S_3$ (non-additively and non-multiplicatively separable variables), $N$ (groups of non-separable variables)}
\State $S_1,S_2,S_3 \gets [\ ], N \gets \{\}$
\State $\textbf{\emph{C}}_{arc} \gets {\frac{\emph{\textbf{ub}}+\emph{\textbf{lb}}}{2}_{D\times D}}$, $\emph{\textbf{cv}} \gets \frac{\emph{\textbf{ub}}+\emph{\textbf{lb}}}{2}$
\State $\emph{\textbf{x}}_{l,l} \gets \emph{\textbf{lb}}$, $\emph{f}_{l,l} \gets \emph{f}(\emph{\textbf{x}}_{l,l})$, $\emph{\textbf{x}}_{u,u} \gets \emph{\textbf{ub}}$, $\emph{{f}}_{u,u} \gets \emph{{f}}(\emph{\textbf{x}}_{u,u})$
\For   {$i\gets$ $D$ to $1$} 
\State  $A=\{1, \ldots, i-1, i+1, \ldots, D\}$
\State  $\emph{\textbf{x}}_{l,u} \gets \emph{\textbf{x}}_{l,l}$, $\emph{\textbf{x}}_{l,u}(A) \gets \emph{\textbf{ub}}(A)$, $\emph{{f}}_{l,u} \gets \emph{{f}}(\emph{\textbf{x}}_{l,u})$
\State  $\emph{\textbf{x}}_{u,l} \gets \emph{\textbf{x}}_{l,l}$, $\emph{\textbf{x}}_{u,l}(i) \gets \emph{\textbf{ub}}(i)$, $\emph{{f}}_{u,l} \gets \emph{{f}}(\emph{\textbf{x}}_{u,l})$
\State $\Delta_{1} \gets \left( \emph{{f}}_{u,l} - \emph{{f}}_{l,l}\right )$, $\Delta_{2} \gets \left( \emph{{f}}_{u,u} - \emph{{f}}_{l,u}\right )$, ${\beta}_{1} \gets \lvert\Delta_{1}-\Delta_{2}\rvert$
\If {$\beta_{1}$ $<$ $\epsilon_{1}$}
\State $S_{1} \gets S_{1} \bigcup \emph{{x}}_{i}$
\Else \State ${\beta}_{2} \gets$ MSVD ($i$, $\emph{\textbf{x}}_{l,l}$, $\emph{\textbf{x}}_{l,u}$, $\emph{\textbf{x}}_{u,l}$, $\emph{\textbf{x}}_{u,u}$, $\emph{{f}}_{l,l}$, $\emph{{f}}_{l,u}$, $\emph{{f}}_{u,l}$, $\emph{{f}}_{u,u}$)
\If {$\beta_{2}$ $<$ $\epsilon_{2}$}
\State $S_{2} \gets S_{2} \bigcup \emph{{x}}_{i}$
\Else \State $\emph{\textbf{x}} \gets [\emph{\textbf{cv}}(1: i-1)), \emph{{x}}_i, \emph{\textbf{cv}}(i+1: D))]$
\State $\emph{\textbf{cv}}(i) \gets$ GSS ($f(\emph{\textbf{x}})$, $\emph{\textbf{ub}}(i)$, $\emph{\textbf{lb}}(i)$, $\epsilon$) /*Search for 
\Statex \quad \; \quad \; \; \; independent minimum point of variable $x_i$*/
\State $\textbf{\emph{C}}_{arc}(i,:) \gets \emph{\textbf{cv}} $
\EndIf
\EndIf
\EndFor
\State $S \gets S / \{S_{1} \bigcup S_{2}\}$
\State ${S}_{3} \gets$ GSVD ($S$, $\textbf{\emph{C}}_{arc}$, $\delta$, $\emph{\textbf{ub}}$) {/*Detect the non-additively and
non-multiplicatively separable variables */}
\State $V \gets S / \{S_{1} \bigcup S_{2} \bigcup S_{3}\}$
\If {$V$ $\neq$ $\emptyset$}
\State $N \gets$ NVG ($V$, $\textbf{\emph{C}}_{arc}$, $\delta$, $\emph{\textbf{ub}}$){/*Form the non-separable
\Statex \quad \; variable groups*/}
\EndIf
\State\Return $S_1$, $S_2$, $S_3$, $N$
\end{algorithmic}
\end{algorithm}

We propose an enhanced detection method called multiplicatively separable variable detection (MSVD), which can identify a broader range of multiplicatively separable variables in LSGO problems.
MSVD utilizes the historical information (Lines 6-7) generated by each variable during the additive separability detection stage.
If a variable is determined to be multiplicatively separable, it is placed into $S_2$ (Line 14).
Otherwise, GSS is applied to search for the independent minimum point of the variable in a reverse order. 
The minimum value point obtained by GSS is recorded in $\emph{\textbf{cv}}$ (Line 17).
$\emph{\textbf{cv}}$ continuously updates with the new minimum points, and the archive matrix $\textbf{\emph{C}}_{arc}$ is used to record the $\emph{\textbf{cv}}$ values generated by the GSS process (Line 18).
The values of $\emph{\textbf{cv}}$ and $\textbf{\emph{C}}_{arc}$ for the previously identified separable variables in $S_1$ and $S_2$ are set to $(\bm{lb}+\bm{ub})/{2}$, as these separable variables do not affect the other variables.

\vspace{-0em}

After Lines 4-21 in Algorithm \ref{algorithm_CSG}, CSG obtains the additively separable variables, the multiplicatively separable variables, and the minimum points of the remaining variable.
CSG employs the generally separable variables detection (GSVD, Algorithm \ref{algorithm_GSVD}) method to identify all generally separable variables. 
In GSVD (Line 23), the interaction between each variable $x_i$ and the other variables $\emph{\textbf{X}}_i$ in $S$ is examined to determine whether it is a generally separable variable.
The $\emph{\textbf{cv}}$ corresponding to the variable being detected is loaded from $\textbf{\emph{C}}_{arc}$ (Line 2), and the values of the other variables in $\emph{\textbf{cv}}$ (excluding the detected variable) are fixed as constants (Lines 3-4).
It is necessary to ensure that the fitness function values of vector $\emph{\textbf{x}}$ are different from the fitness function values of vectors $\emph{\textbf{x}}_l$ and $\emph{\textbf{x}}_r$ by applying incremental and decremental perturbations to the detecting variable $\emph{\textbf{x}}$ (Lines 6-13).
The fitness function values of the three vectors $\emph{\textbf{x}}$, $\emph{\textbf{x}}_l$, and $\emph{\textbf{x}}_r$ are compared, and the separability of variable $x_i$ is determined (Lines 14-16).
If the fitness function value of $\emph{\textbf{x}}$ is not the smallest among the three, then the variable $x_i$ is not a generally separable variable.
Otherwise, the variable is considered a generally separable variable and is placed into $S_3$ (Line 15).

After detecting all separable variables, CSG utilizes the non-separable variable grouping (NVG) method to group the interacting variables together (Lines 25-27).

We introduce the CSG grouping process further with a brief example, as depicted in Fig. \ref{pic4}.
In the example, $x_1$ is an additively separable variable, $x_2$ and $x_3$ are multiplicatively separable variables, $x_4$ and $x_5$ are composite separable variables, and $x_6$ and $x_7$ together constitute a group of non-separable variables.
CSG decomposes the problem sequentially and ultimately identifies three separable variable groups, along with one group of non-separable variables.

\begin{figure*}[htbp]
\centerline{\includegraphics[width=0.75\linewidth]{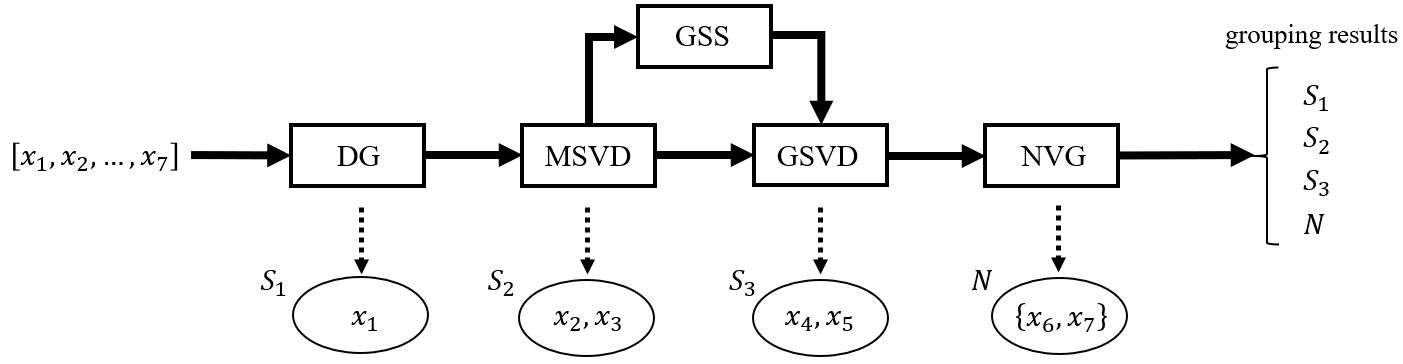}}
\caption{ An example to illustrate the grouping process of CSG with a problem $f(\boldsymbol{x})=x_{1}+x_{2} \cdot x_{3}+\sqrt{x_{4}+x_{5}}+\left(x_{6}-x_{7}-1\right)^{2}$}
\label{pic4}
\vspace{-0.5cm}
\end{figure*}

\begin{algorithm}[!h]
\small
\caption{GSVD}
\label{algorithm_GSVD}
\begin{algorithmic}[1]
\Require{$S$ {(all input variables)}, $\textbf{\emph{C}}_{arc}$ {(archival matrix)}, $\delta$ {(perturbation value)}, $\emph{\textbf{ub}}$}
\Ensure{ $S_3$ {(non-additively and non-multiplicatively separable variables)}}
\For   {each variable $x_i$ in \emph{\textbf{S}}} 
\State $\emph{\textbf{x}} \gets \textbf{\emph{C}}_{arc}(x_i,:)$
\State $S \gets S / x_i $
\State $\emph{\textbf{x}}(S) \gets \emph{\textbf{ub}}(S)$
\State $\emph{\textbf{x}}_l \gets \emph{\textbf{x}}, \emph{\textbf{x}}_r \gets \emph{\textbf{x}}$
\While $ \enspace f(\emph{\textbf{x}}_l)-f(\emph{\textbf{x}}) =0 \vee f(\emph{\textbf{x}}_r)-f(\emph{\textbf{x}})=0$ 
\State $\emph{\textbf{x}}_l (x_i) \gets \emph{\textbf{x}} (x_i) - \delta$
\State $\emph{\textbf{x}}_r (x_i) \gets \emph{\textbf{x}} (x_i) + \delta$
\State $\delta \gets \delta*10 $
\If {$\emph{\textbf{x}}_l (x_i)$ or $\emph{\textbf{x}}_r (x_i)$ is out of boundaries}
\State break
\EndIf
\EndWhile
\If {$f(\emph{\textbf{x}}_l)-f(\emph{\textbf{x}})> 0 \wedge f(\emph{\textbf{x}}_r)-f(\emph{\textbf{x}}) > 0$}
\State $S_{3} \gets S_{3} \bigcup \emph{{x}}_{i}$
\EndIf
\EndFor
\State\Return $S_3$
\end{algorithmic}
\end{algorithm}

\subsection{Multiplicatively Separable Variable Detection}

The proposed method, multiplicatively separable variable detection (MSVD), begins by extracting the subfunctions that contain the detected variable $x_i$ in the additively separable problem.
Next, the extracted functions are transformed by a logarithmic operation.
The difference between the detected variable $x_i$ and the remaining variables in the transformed function is calculated using the finite difference principle.
Finally, the calculated difference is compared to a threshold to determine if the variable $x_i$ satisfies the multiplicative separability.

\begin{proposition} \label{Pro1}
If $f(\boldsymbol{x})$ is an additively separable function consisting of a multiplicatively separable subfunction $f_{i}(\boldsymbol{X}_i)$ with a multiplicatively separable variable $x_i$.
Let $\boldsymbol{x}_1=({x}_1,...,{x}_i,...,{x}_n)$ and $\boldsymbol{x}_2=({x}_1,...,{x}_i/2,...,{x}_n)$,
then $F(\boldsymbol{x})=f(\boldsymbol{x}_1)-f(\boldsymbol{x}_2)$ is a multiplicatively separable function.
\end{proposition}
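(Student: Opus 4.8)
The plan is to use the two hypotheses in sequence: additive separability of $f$ to eliminate every term that does not involve $\boldsymbol{X}_i$, and then multiplicative separability of the single subfunction $f_i$ to expose a product structure in what survives.

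First I would invoke Definition~\ref{Additive Separability} to write $f(\boldsymbol{x})=f_i(\boldsymbol{X}_i)+\sum_{j\ne i}f_j(\boldsymbol{X}_j)$ with the $\boldsymbol{X}_j$ pairwise disjoint, so $x_i\notin\boldsymbol{X}_j$ for $j\ne i$. The vectors $\boldsymbol{x}_1$ and $\boldsymbol{x}_2$ agree in every coordinate except the $i$-th, where they are $x_i$ and $x_i/2$; hence each $f_j(\boldsymbol{X}_j)$ with $j\ne i$ takes the same value at $\boldsymbol{x}_1$ and at $\boldsymbol{x}_2$ and cancels in the difference. Writing $\boldsymbol{Z}:=\boldsymbol{X}_i\setminus\{x_i\}$, this yields
\[
F(\boldsymbol{x})=f_i(x_i,\boldsymbol{Z})-f_i(x_i/2,\boldsymbol{Z}).
\]

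Next I would apply Definition~\ref{Multiplicative Separability} to $f_i$. Since $x_i$ is a multiplicatively separable variable of $f_i$, we may group the factors of $f_i$ as $f_i(\boldsymbol{X}_i)=u(x_i)\,v(\boldsymbol{Z})$, where $u$ depends on $x_i$ alone and $v$ (the product of the remaining factors) depends only on $\boldsymbol{Z}$. Factoring $v(\boldsymbol{Z})$ out of both terms of $F$ gives
\[
F(\boldsymbol{x})=\bigl(u(x_i)-u(x_i/2)\bigr)\,v(\boldsymbol{Z}).
\]
Setting $\tilde u(x_i):=u(x_i)-u(x_i/2)$, the function $F$ is the product of $\tilde u$, which depends on $x_i$ only, and $v$, which depends on variables disjoint from $x_i$ (the remaining variables of $\boldsymbol{x}$ play no role at all). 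This is precisely condition~1 of Definition~\ref{Multiplicative Separability}, with $x_i$ sitting in its own factor; if $f_i$ carries several multiplicative factors, those not touched by $x_i$ reappear unchanged and remain disjoint.

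The main obstacle is condition~2 of Definition~\ref{Multiplicative Separability}, namely that each factor is sign-definite on the domain. The factor $v(\boldsymbol{Z})$ inherits sign-definiteness from $f_i$, and likewise any untouched factors. The subtle case is $\tilde u(x_i)=u(x_i)-u(x_i/2)$: sign-definiteness of $u$ does not by itself force sign-definiteness of $\tilde u$. I would resolve this by restricting to the operative search box, where for the separable function classes under consideration $u$ is monotone in $x_i$, so $u(x_i)$ and $u(x_i/2)$ stay ordered and $\tilde u$ keeps a constant sign; alternatively, one can observe that MSVD only needs $F$ to have the product form above and to be non-zero at the finitely many sampling points it evaluates, so it is the factorisation, not global sign-definiteness, that drives the subsequent logarithmic-transform plus finite-difference test. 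I would state explicitly which of these conventions the paper adopts, since this is the one step of the argument that is not purely algebraic.
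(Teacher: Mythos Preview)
Your argument follows the same two-step skeleton as the paper: use additive separability so that the terms $f_j(\boldsymbol{X}_j)$ with $j\neq i$ cancel in $f(\boldsymbol{x}_1)-f(\boldsymbol{x}_2)$, then use multiplicative separability of $f_i$ to factor what remains as (function of $x_i$)$\times$(function of the other variables in $\boldsymbol{X}_i$). The only substantive difference is that the paper tacitly takes the $x_i$-factor to be $x_i$ itself, writing $f_i(\boldsymbol{X}_i)=x_i\cdot f_i'(\boldsymbol{X}_i')$, so that the difference collapses to the explicit product $\tfrac{1}{2}\,x_i\cdot f_i'(\boldsymbol{X}_i')$; your version with a general $u(x_i)$ and $\tilde u(x_i)=u(x_i)-u(x_i/2)$ is strictly more general and covers the paper's case when $u$ is the identity. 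Your discussion of condition~2 (sign-definiteness of $\tilde u$) goes beyond the paper, which simply does not check it; flagging that MSVD really only needs the factorisation to be non-zero at the sampled points is the right way to reconcile this.
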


\begin{proof}
Since $f(\boldsymbol{x})$ is an additively separable function, 
\begin{eqnarray}\label{equ consecutive opt}
\begin{split}
f(\textbf{\emph{x}}_1) &=f_1(\emph{\textbf{X}}_1)+...+f_i(\emph{\textbf{X}}_i)+...f_m(\emph{\textbf{X}}_m)\\&= f_i(\emph{\textbf{X}}_i)+ \sum_{j=1, j \neq i}^{m}f_j( \emph{\textbf{X}}_j)\\&= f_i(\emph{\textbf{X}}_i) + \sum
\end{split}
\end{eqnarray}
Here we abbreviate $\sum_{j=1, j \neq i}^{m}f_j( \emph{\textbf{X}}_j)$ as $\sum$.
$f_{i}(\boldsymbol{X}_i)$ is a multiplicatively separable function with a multiplicatively separable variable $x_i$.
\begin{eqnarray}\label{equ consecutive opt}
\begin{split}
f(\textbf{\emph{x}}_1) &=x_i \cdot f_{i}^{'}(\emph{\textbf{X}}_i^{’})  + \sum 
\end{split}
\end{eqnarray}
Similarly, for vector $\boldsymbol{x}_2$
\begin{eqnarray}\label{equ consecutive opt}
\begin{split}
f(\textbf{\emph{x}}_2) &=\frac{1}{2} \cdot x_i \cdot f_{i}^{'}(\emph{\textbf{X}}_i^{'})  + \sum
\end{split}
\end{eqnarray}
Then
\begin{eqnarray}\label{equ consecutive opt}
\begin{split}
F(\boldsymbol{x})=f(\boldsymbol{x}_1)-f(\boldsymbol{x}_2)=\frac{1}{2} \cdot x_i \cdot f_{i}^{'}(\emph{\textbf{X}}_i) 
\end{split}
\end{eqnarray}
\end{proof}
It can be observed that $F(\boldsymbol{x})$ is a multiplicatively separable function, and we can utilize Eq. (\ref{equ8}) and Eq. (\ref{equ9}) to detect multiplicatively separable variable $x_i$.

The proposed method, MSVD, is presented in detail in Algorithm \ref{algorithm_MSVD}.
In Lines 1-6, MSVD transforms the four detected functions in Eqs. (\ref{equ8}) and (\ref{equ9}) according to Proposition \ref{Pro1} and generates four new functions for multiplicatively separable variable detection.
Based on the vector $\boldsymbol{x}_1$$=({x}_1,...,{x}_i,...,{x}_n)$ in the previous stage, we construct a new vector $\boldsymbol{x}_2$$=({x}_1,...,{x}_i/2,...,{x}_n)$ by multiplying the detected variable in the vector by a factor (set to 0.5 in this article) (Lines 1-4).
MSVD extracts four subfunctions that contain the detected variables $x_i$ in the additively separable problem (Lines 5-6).
MSVD makes full use of the function evaluation value obtained during the additively separable detection stage, effectively preventing redundant detection and conserving computational resources.
The four new extracted functions are transformed by a logarithmic operation and the difference value is obtained according Eqs. (\ref{equ8}) and (\ref{equ9}) (Lines 7-9).
The calculated difference value is compared to a threshold to determine whether $x_i$ is a multiplicatively separable variable.

\begin{algorithm}[!h]
\small
\caption{MSVD}
\label{algorithm_MSVD}
\begin{algorithmic}[1]
\Require{$i$, $\emph{\textbf{x}}_{l,l}$, $\emph{\textbf{x}}_{l,u}$, $\emph{\textbf{x}}_{u,l}$, $\emph{\textbf{x}}_{u,u}$, $\emph{{f}}_{l,l}$, $\emph{{f}}_{l,u}$, $\emph{{f}}_{u,l}$, $\emph{{f}}_{u,u}$}
\Ensure{$\beta_{2}$}
\State $\emph{\textbf{x}}_{l,l}^{'} \gets \emph{\textbf{x}}_{l,l}$, $\emph{\textbf{x}}_{l,l}^{'} (i) \gets \frac{1}{2} \emph{\textbf{lb}}(i)$, $\emph{{f}}_{l,l}^{'} \gets \emph{{f}}(\emph{\textbf{x}}_{l,l}^{'})$
\State $\emph{\textbf{x}}_{u,l}^{'} \gets \emph{\textbf{x}}_{u,l}$, $\emph{\textbf{x}}_{u,l}^{'} (i) \gets \frac{1}{2} \emph{\textbf{ub}}(i)$, $\emph{{f}}_{l,l}^{'} \gets \emph{{f}}(\emph{\textbf{x}}_{u,l}^{'})$
\State $\emph{\textbf{x}}_{l,u}^{'} \gets \emph{\textbf{x}}_{l,u}$, $\emph{\textbf{x}}_{l,u}^{'} (i) \gets \frac{1}{2} \emph{\textbf{lb}}(i)$, $\emph{{f}}_{l,u}^{'} \gets \emph{{f}}(\emph{\textbf{x}}_{l,u}^{'})$
\State $\emph{\textbf{x}}_{u,u}^{'} \gets \emph{\textbf{x}}_{u,u}$, $\emph{\textbf{x}}_{u,u}^{'} (i) \gets \frac{1}{2} \emph{\textbf{ub}}(i)$, $\emph{{f}}_{u,u}^{'} \gets \emph{{f}}(\emph{\textbf{x}}_{u,u}^{'})$
\State $\emph{{F}}_{l,l} = \emph{{f}}_{l,l} - \emph{{f}}_{l,l}^{'}$, $\emph{{F}}_{u,l} = \emph{{f}}_{u,l} - \emph{{f}}_{u,l}^{'}$
\State $\emph{{F}}_{l,u} = \emph{{f}}_{l,u} - \emph{{f}}_{l,u}^{'}$, $\emph{{F}}_{u,u} = \emph{{f}}_{u,u} - \emph{{f}}_{u,u}^{'}$
\State $\Delta_{1}^{'} = \ln \lvert \emph{{F}}_{l,l} \rvert $ $-$ $\ln \lvert \emph{{F}}_{u,l} \rvert $
\State $\Delta_{2}^{'} = \ln \lvert \emph{{F}}_{l,u} \rvert $ $-$ $\ln \lvert \emph{{F}}_{u,u} \rvert$
\State ${\beta}_{2}= \lvert \Delta_{1}^{'}-\Delta_{2}^{'} \rvert $
\State\Return $\beta_{2}$
\end{algorithmic}
\end{algorithm}

\subsection{Non-separable Group Detection}
CSG identifies all separable variables in the first three stages.
To efficiently group the non-separable variables into multiple mutually exclusive subgroups, CSG employs a binary detection method based on the non-separable variable groups in the final stage.
We refer to this method as non-separable variable grouping (NVG).


In the last stage of CSG, multiple groups of non-separable variables exist simultaneously.
The main idea of NVG is to extract the detected variable $x_i$ dimension by dimension and then use a recursive group detection (RGD) method to detect the interactions between the variable $x_i$ and the formed non-separable variable groups.

The pseudocode of NVG is presented in Algorithm \ref{algorithm_NVG}.
CSG begins by initializing the first non-separable variable group, denoted as $N_1$, with the first variable in $V$ (Line 1). 
Then, the algorithm proceeds to examine the second variable in $V$.
The RGD method is employed to detect the interaction between the detected variable $x_i$ and the existing variable groups $N$ (Line 4).
The non-separable groups detected by RGD, which interact with the detected variable $x_i$, are denoted as $N^*$.
If there is no interaction between $x_i$ and the existing variable groups $N$, the variable $x_i$ forms a new non-separable variable group and is added to the set $N$ (Line 6).
In the case where the variable $x_i$ interacts with only one non-separable variable group $N_i$, the variable is added to that specific group $N_i$ (Lines 8-9).
Otherwise, it indicates that there are indirect interactions between the non-separable variable groups in $N^*$ when the number of groups in $N^*$ is greater than one. 
To address this, all groups in $N^*$ are merged into a single non-separable group, and subsequently, the variable $x_i$ is added to this merged group (Lines 11-12).

\begin{algorithm}[!h]
\small
\caption{NVG}
\label{algorithm_NVG}
\begin{algorithmic}[1]
\Require{$V$ {(all input variables)}, $\textbf{\emph{C}}_{arc}$ {(archival matrix)}, $\delta$ {(perturbation value)}, $\emph{\textbf{ub}}$}
\Ensure{$N$ (set of non-separable variable groups $N_i$ )}
\State $N_1 \gets x_r$ {/*Randomly choose a variable $x_r$ to form the first non-separable variable group*/}
\State $V \gets V / x_r$ 
\For   {each variable $x_i$ in $V$}
\State $N^*$ $\gets $ RGD ($x_i$, $V$, $\textbf{\emph{C}}_{arc}$, $\delta$, $ub$) /*find the groups that
\Statex \quad \  interact with $x_i$ in the existing non-separable 
\Statex \quad \  variable groups*/
\If {$|N^*|==0$}
\State $N \gets N \cup x_i$ {/*Utilize $x_i$ to form a new non-separable 
\Statex \qquad \quad variable group*/}
\If{$|N^*|==1$}
\State $N \gets N / N_i$
\State $N \gets N\cup \{N_i \cup x_i\}$ 
\Else \State $N \gets N / \{N_i,...,N_j\}$
\State $N \gets N\cup \{N_i \cup N_j \cup ...\cup N_k \cup x_i\}$
\EndIf
\EndIf
\EndFor
\State\Return $N$
\end{algorithmic}
\end{algorithm}

\vspace{-0cm}

The pseudocode of RGD is presented in Algorithm \ref{algorithm_RGD}.
In Lines 1-11, RGD is similar to GSVD.
RGD first extracts the corresponding context vector $\emph{\textbf{x}}$ of the detected variable $x_i$ from $\textbf{\emph{C}}_{arc}$ (Line 1).
It fixes the values in $\emph{\textbf{x}}$ that exist in $N$ to the upper bound (Line 2).
Then, incremental perturbation and decremental perturbation are applied to $\emph{\textbf{x}}$ to obtain new vectors $\emph{\textbf{x}}_l$ and $\emph{\textbf{x}}_r$ until the fitness function values of both $\emph{\textbf{x}}_l$ and $\emph{\textbf{x}}_r$ differ from the fitness function value of the original vector $\emph{\textbf{x}}$ (Lines 4-11).
While GSVD only performs the variable-to-set interaction detection once to determine whether a variable is generally separable, NVG applies a recursive procedure to find all non-separable variable groups that interact with the variable $x_i$ (Lines 12-23).
Based on the minimum points shift principle, if the fitness value of $\emph{\textbf{x}}$ is the smallest among the three fitness values, it is proven that there is no interaction between $x_i$ and the non-separable variable groups in the current $N$. 
If $x_i$ interacts with $N$ and the number of groups in $N$ is 1, the group is output directly (Lines 13-14).
Otherwise, all groups in set $N$ are split in half into two sets (Line 16), and RGD is recursively called until all groups of non-separable variables are found (Lines 17-19).

NVG is proposed for cases when only non-separable variables exist in the last stage of CSG.
Unlike previous approaches that identify multiple non-separable variable groups dimension by dimension and then merge indirectly interacting variable groups based on shared variables, NVG combines the merging of indirectly interacting variables with the grouping process.
Additionally, NVG has the advantage of computational complexity when dealing with non-separable problems characterized by a small number of subgroups and the absence of indirect interactions.

\begin{algorithm}[!h]
\small
\caption{RGD}
\label{algorithm_RGD}
\begin{algorithmic}[1]
\Require{$x_i$, $V$ {(all input variables)}, $\textbf{\emph{C}}_{arc}$ {(archival matrix)}, $\delta$ {(perturbation value)}, $\emph{\textbf{ub}}$}
\Ensure{$N^*$ (non-separable groups that interact with $x_i$ in $N$)}
\State $\emph{\textbf{x}} \gets \textbf{\emph{C}}_{arc}(x_i,:)$
\State $\emph{\textbf{x}}(N) \gets \emph{\textbf{ub}}(N)$
\State $\emph{\textbf{x}}_l \gets \emph{\textbf{x}}, \emph{\textbf{x}}_r \gets \emph{\textbf{x}}$
\While {$ \enspace f(\emph{\textbf{x}}_l)-f(\emph{\textbf{x}}) =0 \vee f(\emph{\textbf{x}}_r)-f(\emph{\textbf{x}})=0$} 
\State $\emph{\textbf{x}}_l (x_i) \gets \emph{\textbf{x}} (x_i) - \delta$
\State $\emph{\textbf{x}}_r (x_i) \gets \emph{\textbf{x}} (x_i) + \delta$
\State $\delta \gets \delta*10 $
\If {$\emph{\textbf{x}}_l (x_i)$ or $\emph{\textbf{x}}_r (x_i)$ is out of boundaries}
\State break
\EndIf
\EndWhile
\If {$f(\emph{\textbf{x}}_l)-f(\emph{\textbf{x}})< 0 \vee f(\emph{\textbf{x}}_r)-f(\emph{\textbf{x}}) < 0$}
\If {$|N| = 1$}
\State $N^* \gets N$
\Else \State split $N$ into two subsets $N_1^*$ and $N_2^*$ of equal size
\State $N_1^* \gets $ RGD ($x_i$, $ N_{1}$, $\textbf{\emph{C}}_{arc}$, $\emph{\textbf{ub}}$, $\delta$)
\State $N_2^* \gets $ RGD ($x_i$, $N_{2}$, $\textbf{\emph{C}}_{arc}$, $\emph{\textbf{ub}}$, $\delta$) {/*Call RGD recursively
\Statex \qquad \quad until all variables that interact with $x_i$ are found*/}
\State $N^* \gets N_1^* \cup N_2^*$
\EndIf
\Else \State $N^* \gets \emptyset$
\EndIf
\State\Return $N^*$
\end{algorithmic}
\end{algorithm}


\subsection{Computational Resource Consumption Analysis}

In this subsection, we will analyze the computational resource consumption of CSG compared to GSG in the context of a $n$-dimensional problem with $p$ additively separable variables and $q$ multiplicatively separable variables.

First, we analyze the omputational resource consumption of GSG.
GSG involves two main stages: the minimum points search stage and the interaction detection stage.
GSG identifies the minimum value point for each variable and employs a recursive interaction detection process to examine the inter-action of the detected variable with other variable.
In a study conducted using the CEC and BNS benchmarks in \cite{GSG}, the minimum points search stage consumes a significant portion of the total computational resources, which accounts for approximately 70\% of the total FEs.
Specifically, each dimension variable requires approximately 40 FEs to search for the minimum point. 


Overall, the minimum points search stage consumes a significant portion of the total computational resources.
The computational resources consumed in this phase are influenced by the problem dimension but remain unaffected by the separability type.
However, due to the composite nature of CSG, the variables identified in the previous stage can help reduce the dimension of problem decomposition in the subsequent stage.  
As a result, the computational resources consumption of the entire algorithm can be influenced by the variables identified in each stage.

In the first stage of CSG, it costs 2$n$+2 FEs to find all $p$ additively separable variableses, so there are $n \-- p$ variables need to be detected in the subsequent stages.
Each dimensional variable requires eight FEs for a multiplicative separability test, which imposes a relatively large computational burden.
However, the historical information from the additively separable detection stage can be reused, reducing the number of FEs in the second stage by half. 
Therefore, the FEs in the second stage to examine whether the remaining $n \-- p$ variables are multiplicatively separable is $4 \times (n \-- p)$.
Then the remaining $n - p - q$ variables need to be searched for their minimum points.

There are two termination conditions for the minimum points search using the golden section method. 
The first condition is when the function value of the two sampling points is equal. 
The second condition is when the difference between the distance of the two sampling points is less than the specified search accuracy $\epsilon$.

The second condition is influenced by the range of variable values and the precision of the GSS method.
The larger the range of variable values and the higher the search precision, the more FEs are required to satisfy the termination condition.
In the GSS method, the search interval is reduced to ${(\sqrt{5}-1)}/{2}$ of its original value after each iteration.
If the upper bound of a variable is $ub$, the lower bound is $lb$, and the precision of the golden section search is $\epsilon$, then the maximum number of FEs required to search for the minimum point of the variable is given by the formula:
\begin{eqnarray}
\begin{split}
\begin{aligned}
k = \lceil \log_{a} {\frac{\epsilon}{(ub-lb)}} x \rceil
\end{aligned}
\end{split}
\end{eqnarray}
where $a={(\sqrt{5}-1)}/{2}$.
Therefore, the search for the minimum points of the remaining $ n \-- p \--q $ variables requires $k\times (n \-- p \--q)$ FEs.

After the GSS stage, each variable requires three FEs to determine if it is a generally separable variable. 
Therefore, all variables require $3 \times (n - p - q)$ FEs.
The type of non-separable variables in the problem, which includes the presence of indirect interactions and the number of non-separable variable groups, influences the computational resource consumption of grouping them in CSG.

After analyzing the computational resources required for each stage, we consider the problems without separable variables to derive the computational complexity.
The computational complexity of each component of CSG is listed in Table \ref{com}.
Overall, the computational complexity of CSG when utilized to decompose an $n$-dimensional problem is $\mathcal{O}(nlogn)$.

\begin{table}[htbp]
\captionsetup{labelfont={color=black}} 
  \centering
  \caption{{Computational complexity of each component of CSG}}
       \renewcommand{\arraystretch}{1.2}
    \begin{tabular}{cc}
    \toprule
    Component     &  Computational Complexity \\
    \midrule
    Separable Sariables Detection      &  $\mathcal{O}(9n)$\\
    Golden Section Search     &  $\mathcal{O}(kn)$\\
    Non-separable Variable Grouping     & $\mathcal{O}(nlogn)$ \\
    \bottomrule
    \end{tabular}%
  \label{com}%
  \vspace{-0cm}
\end{table}%
\color{black}

{In summary, GSG utilizes the same approach to address different types of variables, including searching for the minimum point of each dimensional variable and conducting interaction detection.  
In contrast, CSG leverages the potential of the DG method to efficiently decompose additively separable variables.   
Furthermore, we introduce two novel methods to effectively handle multiplicative separable variables and groups of non-separable variables.
Additionally, CSG further establishes a step-by-step decomposition framework, which greatly improves the efficiency of problem decomposition.
The experimental study in the following section of the article also includes a comparison of the computational resource consumption between GSG and CSG on several benchmarks.}

\section{Experimental Study}\label{section Experimental}
In this section, we first design 15 benchmark functions to thoroughly evaluate the performance of CSG, along with some state-of-the-art decomposition methods. 
We then examine the grouping accuracy and efficiency of CSG and compare it with other methods. 
Finally, we integrate the grouping results into the CC framework to test the optimization performance.

For comparison, we select GSG \cite{GSG}, RDG2 \cite{RDG2}, and MDG \cite{MDG} as the comparison methods for CSG. 
GSG serves as the main optimization target for CSG. 
RDG2 utilizes a recursive decomposition grouping method with an adaptive threshold, making it efficient and accurate on most benchmark functions. 
However, the RDG2 method may mistakenly assign some separable variables into the same group if it fails to recognize their separability. 
{The reason for choosing RDG2 instead of RDG3 as the comparative algorithm is that RDG3 is specifically designed for addressing overlapping problems.}
On the other hand, MDG merges non-separable variables into appropriate groups using a binary-tree-based iterative method, which may result in more non-separable variable groups when the grouping accuracy is not high.

CSG adopts the same method as RDG2 to set the threshold $\epsilon_{1}$ for detecting additively separable variables and designs threshold $\epsilon_{2}$ according to MDG for detecting multiplicatively separable variables. 
The parameters used in the comparison algorithms are kept the same as in their original papers to ensure fairness in the evaluation process.

\begin{table*}[!t]
  \caption{Test functions in the proposed BMS}
    \resizebox{\linewidth}{!}{
     \renewcommand{\arraystretch}{0.9}
 \renewcommand{\tabcolsep}{13pt}
    \begin{tabular}{lllllllllll}
    \toprule
    No. & \multicolumn{2}{p{8.38em}}{Category} & \multicolumn{8}{l}{Test function} \\
    \midrule
    $f_{1}$ & \multicolumn{2}{l}{\multirow{6}[2]{*}{\shortstack{Fully separable functions with \\ two types of separable variables}}} & \multicolumn{8}{l}{$F_{1}(\emph{\textbf{x}}) = F_{\mathrm{rast}}(\textbf{\emph{z}}(P_1:P_{D/2})) + F_{\mathrm{prodras}}(\textbf{\emph{z}}(P_{{D/2}+1}:P_D))$     } \\
    $f_{2}$ & \multicolumn{2}{l}{} & \multicolumn{8}{l}{$F_{2}(\emph{\textbf{x}}) = F_{\mathrm{sphe}}(\textbf{\emph{z}}(P_1:P_{D/2})) + F_{\mathrm{prodsqu}}(\textbf{\emph{z}}(P_{{D/2}+1}:P_D))$} \\
    $f_{3}$ & \multicolumn{2}{l}{} & \multicolumn{8}{l}{$F_{3}(\emph{\textbf{x}}) = F_{\mathrm{rast}}(\textbf{\emph{z}}(P_1:P_{D/2})) + F_{\mathrm{logabs}}(\textbf{\emph{z}}(P_{{D/2}+1}:P_D))$} \\
    $f_{4}$ & \multicolumn{2}{l}{} & \multicolumn{8}{l}{$F_{4}(\emph{\textbf{x}}) = F_{\mathrm{sphe}}(\textbf{\emph{z}}(P_1:P_{D/2})) + F_{\mathrm{cone}}(\textbf{\emph{z}}(P_{{D/2}+1}:P_D))$} \\
    $f_{5}$ & \multicolumn{2}{l}{} & \multicolumn{8}{l}{$F_{5}(\emph{\textbf{x}}) = F_{\mathrm{prodsqu}}(\textbf{\emph{z}}(P_1:P_{D/2})) + F_{\mathrm{logabs}}(\textbf{\emph{z}}(P_{{D/2}+1}:P_D))$} \\
    $f_{6}$ & \multicolumn{2}{l}{} & \multicolumn{8}{l}{$F_{6}(\emph{\textbf{x}}) = F_{\mathrm{prodras}}(\textbf{\emph{z}}(P_1:P_{D/2})) + F_{\mathrm{cone}}(\textbf{\emph{z}}(P_{{D/2}+1}:P_D))$} \\
    \midrule
    $f_{7}$ & \multicolumn{2}{l}{\multirow{3}[2]{*}{\shortstack{Fully separable functions with \\ three types of separable variables}}} & \multicolumn{8}{l}{$F_{7}(\emph{\textbf{x}}) = F_{\mathrm{rast}}(\textbf{\emph{z}}(P_1:P_{D/5*2})) + F_{\mathrm{prodsqu}}(\textbf{\emph{z}}(P_{D/5*2+1}:P_{D/10*7})) + F_{\mathrm{logabs}}(\textbf{\emph{z}}(P_{D/10*7+1}:P_{D}))$}   \\
    $f_{8}$ & \multicolumn{2}{l}{} & \multicolumn{8}{l}{$F_{8}(\emph{\textbf{x}}) = F_{\mathrm{elli}}(\textbf{\emph{z}}(P_1:P_{D/10*3})) + F_{\mathrm{prodras}}(\textbf{\emph{z}}(P_{D/10*3+1}:P_{D/10*7})) + F_{\mathrm{cone}}(\textbf{\emph{z}}(P_{D/10*7+1}:P_{D}))$}   \\
    $f_{9}$ & \multicolumn{2}{l}{} & \multicolumn{8}{l}{$F_{9}(\emph{\textbf{x}}) = F_{\mathrm{sphe}}(\textbf{\emph{z}}(P_1:P_{D/10*3})) + F_{\mathrm{prodras}}(\textbf{\emph{z}}(P_{D/10*3+1}:P_{D/5*3})) + F_{\mathrm{logabs}}(\textbf{\emph{z}}(P_{D/5*3+1}:P_{D}))$}   \\
    \midrule
    $f_{10}$ & \multicolumn{2}{l}{\multirow{7}[2]{*}{\shortstack{Partially separable functions with \\ three types of separable variables}}} & \multicolumn{8}{l}{$F_{10}(\emph{\textbf{x}}) = F_{\mathrm{sphe}}(\textbf{\emph{z}}) + F_{\mathrm{prodras}}(\textbf{\emph{z}}) + F_{\mathrm{cone}}(\textbf{\emph{z}}) + F_{\mathrm{rosen}}(\textbf{\emph{z}}(P_{1}:P_{D/4}))$ }   \\
    $f_{11}$ & \multicolumn{2}{l}{} & \multicolumn{8}{l}{$F_{11}(\emph{\textbf{x}}) = F_{\mathrm{rast}}(\textbf{\emph{z}}) + F_{\mathrm{prodsqu}}(\textbf{\emph{z}}) + F_{\mathrm{logabs}}(\textbf{\emph{z}}) + F_{\mathrm{schw}}(\textbf{\emph{z}}(P_{1}:P_{D/4})) $}   \\
    $f_{12}$ & \multicolumn{2}{l}{} & \multicolumn{8}{l}{$F_{12}(\emph{\textbf{x}}) = F_{\mathrm{rast}}(\textbf{\emph{z}}) + F_{\mathrm{prodsqu}}(\textbf{\emph{z}}) + F_{\mathrm{logabs}}(\textbf{\emph{z}}) + \sum_{k=1}^{D/4/m}F_{\rm{rot\_}\rm{rast}}(\textbf{\emph{z}}(P_{(k-1)*m+1}:P_{k*m})) $}   \\
    $f_{13}$ & \multicolumn{2}{l}{} & \multicolumn{8}{l}{$F_{13}(\emph{\textbf{x}}) = F_{\mathrm{rast}}(\textbf{\emph{z}}) + F_{\mathrm{prodsqu}}(\textbf{\emph{z}}) + F_{\mathrm{logabs}}(\textbf{\emph{z}}) + \sum_{k=1}^{D/4/m}F_{\rm{schw}}(\textbf{\emph{z}}(P_{(k-1)*m+1}:P_{k*m})) ) $}   \\
    $f_{14}$ & \multicolumn{2}{l}{} & \multicolumn{8}{l}{$F_{14}(\emph{\textbf{x}}) = F_{\mathrm{sphe}}(\textbf{\emph{z}}) + F_{\mathrm{prodras}}(\textbf{\emph{z}}) + F_{\mathrm{cone}}(\textbf{\emph{z}}) + \sqrt{ \sum_{k=1}^{D/4/m}F_{\rm{rot\_}\rm{rast}}[\textbf{\emph{z}}(P_{(k-1)*m+1}:P_{k*m})]}$}   \\
    $f_{15}$ & \multicolumn{2}{l}{} & \multicolumn{8}{l}{$F_{15}(\emph{\textbf{x}}) = F_{\mathrm{rast}}(\textbf{\emph{z}}) + F_{\mathrm{prodsqu}}(\textbf{\emph{z}}) + F_{\mathrm{logabs}}(\textbf{\emph{z}}) + \log \{ \sum_{k=1}^{D/4/m}F_{schw}(\textbf{\emph{z}}(P_{(k-1)*m+1}:P_{k*m}))\} $}   \\
    \bottomrule
    \end{tabular}%
  \label{tab BMS}%
}
\vspace{-0.5cm}
\end{table*}%

\subsection{Benchmark Functions}
Based on six basic functions, CEC benchmark sets \cite{CEC, CEC2010, CEC2013} generate a range of separable functions, partially-separable functions, and fully-non-separable functions.
However, most separable functions in the CEC test functions are additively separable functions, with Ackley's function being the only non-additively separable function among the basic functions. 
This limitation makes it difficult to evaluate the performance of decomposition methods that focus on identifying various types of separable variables.

To address these limitations, the BNS \cite{GSG} introduces four non-additively separable basis functions, including two multiplicatively separable basis functions and two composite separable basis functions. 
Based on these basis functions, BNS designs 12 test problems with varying degrees of separability.

However, a drawback of BNS is that each function only contains one type of separable variable, while real-world problems often involve multiple types of separable variables. 
{In the context of NNs, the pre-activation function encompasses both multiplicative separable variable weights and additive separable variable biases \cite{DDG}.
Moreover, activation functions in neural networks generally exhibit the monotonicity property. 
Therefore, the separable variables within its inner function satisfy composite separability \cite{GSG}.
In the context of system reliability analysis problems, the reliability of a series system is the product of the reliability of multiple subsystems \cite{real1}.     
The human health model \cite{real2} and business optimal control model \cite{real3} involve both additive separability and multiplicative separability between multiple submodules.}
To overcome this limitation, we propose a novel benchmark called the benchmark based on multiple separability (BMS), which incorporates diverse types of separable variables within each benchmark function. 
The functions in BMS are generated by combining basis functions, resulting in a highly scalable approach.
The BMS benchmark is presented in Table \ref{tab BMS}.

The basis functions utilized in BMS are derived from CEC \cite{CEC2013} and BNS \cite{GSG}. 
The benchmark functions employed in this article have dimensions of 1000, 2000, 3000, 4000, and 5000, with the problem dimension denoted as $D$.
In the formulation, $P$ represents a sequence of random perturbations ranging from 1 to $D$, which signifies the permutation of the $D$-dimensional variables in the problem. 
Additionally, $\textbf{\emph{z}}= \textbf{\emph{x}} - \textbf{\emph{o}}$ denotes shifting the global optimum of variables from 0 to $\textbf{\emph{o}}$. 
The subscript ``$rot$'' indicates the coordinate rotation operator, which utilizes an orthogonal matrix to transform the corresponding variables and make the problems non-separable.

$f_1$-$f_6$ are fully separable functions that consist of two types of separable variables, while $f_7$-$f_9$ are fully separable functions with three types of separable variables.
$f_{10}$-$f_{15}$ are partially separable functions with three types of separable variables and non-separable variables.
The number of separable variables for the three types of separability and non-separable variables in $f_{10}$-$f_{15}$ is $D$/4, which is not shown in Table \ref{tab BMS} to avoid excessively long formulas.
$f_{10}$-$f_{15}$ incorporate non-separable variables that exhibit various forms of dependence.
$f_{10}$-$f_{11}$ contains only one non-separable variable group, while the subcomponents in $f_{12}$-$f_{15}$ have a size of 50.


In summary, the functions within BMS encompass a wide range of separable variables and groups of non-separable variables, introducing difficulties in problem decomposition and optimization.

\subsection{Comparison on Decomposition}
In this section, we will compare the performance of CSG with GSG, RDG2, and MDG in decomposing benchmark functions within BMS. 

\subsubsection{Accuracy Metric}
The evaluation metrics will include the accuracy of the decomposition and the computational resource consumption.

For the original problem, the set of truly separable variables is denoted as $\textbf{\emph{sep}}^*$, and the $m$ groups of true non-separable variables are denoted as $\textbf{\emph{nonsep}}^*=\{\textbf{\emph{g}}_i^*,...,\textbf{\emph{g}}_m^*\}$.
Regarding the grouping results, the formed set of separable variables is denoted as $\textbf{\emph{sep}}$ and the formed $k$ groups of non-separable variables are denoted as $\textbf{\emph{nonsep}}=\{\textbf{\emph{g}}_i,...,\textbf{\emph{g}}_k\}$.

The grouping accuracy metric for the separable variables (SA) in the problem is defined as:
\begin{eqnarray}\label{equ SA}
\begin{split}
SA=\frac{|\textbf{\emph{sep}}^*\cap \textbf{\emph{sep}}|}{|\textbf{\emph{sep}}^*|}
\end{split}
\end{eqnarray}
This metric represents the proportion of the obtained separable variables in the true separable variables set.

The grouping accuracy metric for the non-separable variables (NA) in the problem is defined as:
\begin{eqnarray}\label{equ NA}
\begin{split}
NA=
\frac{\sum_i^m \max\{|\textbf{\emph{g}}_i^*\cap \textbf{\emph{g}}_1|,|\textbf{\emph{g}}_i^*\cap \textbf{\emph{g}}_2|,..,|\textbf{\emph{g}}_i^*\cap \textbf{\emph{g}}_k|\}}{\sum_i^m |\textbf{\emph{g}}_i^*|}
\end{split}
\end{eqnarray}
For each true non-separable variable group $\textbf{\emph{g}}_i^*$, we find the group $\textbf{\emph{g}}_i$ that contains the highest number of common variables among all the formed non-separable variable groups.
The group $\textbf{\emph{g}}_i$ is only checked once.
We calculate the sum of common variables for each true non-separable variable group and its most similar formed group. 
NA represents the proportion of the sum of common variables in the true non-separable variables.

\begin{table*}[h]
  \centering
  \caption{The decomposition results of each algorithm on 1000-D BMS. SA represents the decomposition accuracy for separable variables, while NA represents the decomposition accuracy for non-separable variables.}
  \resizebox{\linewidth}{!}{
     \renewcommand{\arraystretch}{1}
 \renewcommand{\tabcolsep}{10pt}
\begin{tabular}{ccc|ccc|ccc|ccc|ccc}
\hline
\multicolumn{2}{c}{\multirow{2}[4]{*}{Benchmark}} & \multirow{2}[4]{*}{Problem} & \multicolumn{3}{c|}{CSG} & \multicolumn{3}{c|}{GSG} & \multicolumn{3}{c|}{RDG2} & \multicolumn{3}{c}{MDG} \bigstrut\\
\cline{4-15}\multicolumn{2}{c}{} &       & SA    & NA    & FEs   & SA    & NA    & FEs   & SA    & NA    & FEs   & SA    & NA    & FEs \bigstrut\\
\hline
\multicolumn{2}{c}{\multirow{13}[6]{*}{BMS}} & $f_{1}$ & \textbf{100.0\%} & -    & 4002 & \textbf{100.0\%} & -     & 38746 & 50.0\% & -     & 6610  & 50.0\% & -     & 2501 \bigstrut[t]\\
\multicolumn{2}{c}{} & $f_{2}$ & \textbf{100.0\%} & -     & 4002 & \textbf{100.0\%} & -     & 37238 & 50.0\% & -     & 6586  & 50.0\% & -     & 2501 \\
\multicolumn{2}{c}{} & $f_{3}$ & \textbf{100.0\%} & -     & 25993 & \textbf{100.0\%} & -     & 38989 & 50.0\% & -     & 6667  & 50.0\% & -     & 2981 \\
\multicolumn{2}{c}{} & $f_{4}$ & \textbf{100.0\%} & -     & 20271 & \textbf{100.0\%} & -     & 32970 & 50.0\% & -     & 7063  & 50.0\% & -     & 2505 \\
\multicolumn{2}{c}{} & $f_{5}$ & \textbf{100.0\%} & -     & 28002 & \textbf{100.0\%} & -     & 37353 & 0.0\%  & -     & 8170   & 0.0\% & -     & 6178 \\
\multicolumn{2}{c}{} & $f_{6}$ & \textbf{100.0\%} & -     & 22088 & \textbf{100.0\%} & -     & 34997 & 0.1\%  & -     & 10030  & 0  \% & -     & 5876 \\
\multicolumn{2}{c}{} & $f_{7}$ & \textbf{100.0\%} & -     & 17602 & \textbf{100.0\%} & -     & 37661 & 40.0\% & -     & 8680  & 40.0\% & -     & 4517 \\
\multicolumn{2}{c}{} & $f_{8}$ & \textbf{100.0\%} & -     & 13720 & \textbf{100.0\%} & -     & 32663 & 30.0\% & -     & 9805  & 30.0\% & -     & 4688 \\
\multicolumn{2}{c}{} & $f_{9}$ & \textbf{100.0\%} & -     & 22402 & \textbf{100.0\%} & -     & 38019 & 30.0\% & -     & 8734  & 30.0\% & -     & 5034 \\
\multicolumn{2}{c}{} & $f_{10}$ &\textbf{100.0\%}   & \textbf{100.0\%}  & 31269 &\textbf{100.0\%}  & \textbf{100.0\%} & 49252  & 33.3\%  & \textbf{100.0\%}  & 20590    & 33.3\%  & \textbf{100.0\%} & 8194 \\
\multicolumn{2}{c}{} & $f_{11}$ &\textbf{100.0\%}   & \textbf{100.0\%}  & 25716 &\textbf{100.0\%}  & \textbf{100.0\%} & 40889  & 33.3\%  & \textbf{100.0\%}  & 10492    & 33.3\%  & \textbf{100.0\%} & 5904 \\
\multicolumn{2}{c}{} & $f_{12}$ &\textbf{100.0\%}   & \textbf{100.0\%}  & 30367 &\textbf{100.0\%}  & \textbf{100.0\%} & 44458  & 33.3\%  & \textbf{100.0\%}  & 12730    & 33.3\%  & \textbf{100.0\%} & 7086 \\
\multicolumn{2}{c}{} & $f_{13}$ &\textbf{100.0\%}   & \textbf{100.0\%}  & 30963 &\textbf{100.0\%}  & \textbf{100.0\%} & 45138  & 33.3\%  & \textbf{100.0\%}  & 12766    & 33.3\%  & \textbf{100.0\%} & 7091 \\
\multicolumn{2}{c}{} & $f_{14}$ &\textbf{100.0\%}   & \textbf{100.0\%}  & 27506 &\textbf{100.0\%}  & \textbf{100.0\%} & 42589  & 33.3\%  & 20.0\%  & 11137    & 33.3\%  & 20.0\% & 5769 \\
\multicolumn{2}{c}{} & $f_{15}$ &\textbf{100.0\%}   & \textbf{100.0\%}  & 29462 &\textbf{100.0\%}  & \textbf{100.0\%} & 44105  & 33.3\%  & 20.0\%  & 10306    & 33.3\%  & 20.0\% & 5892 \\
\hline
\end{tabular}%
}
  \label{tab GroupResults}%
\end{table*}%

\begin{table*}[!h]
  \centering
  \caption{The decomposition results of each algorithm on higher-dimensional problems in BMS. The results represent the average decomposition accuracy and average computational resource consumption for all the problems in the benchmarks.}
  \resizebox{\linewidth}{!}{
     \renewcommand{\arraystretch}{1.1}
 \renewcommand{\tabcolsep}{6pt}
\begin{tabular}{cc|ccc|ccc|ccc|ccc}
\hline
\multirow{2}[4]{*}{Benchmark} & \multirow{2}[4]{*}{Dimensionality} & \multicolumn{3}{c|}{CSG} & \multicolumn{3}{c|}{GSG} & \multicolumn{3}{c|}{RDG2} & \multicolumn{3}{c}{MDG} \bigstrut\\
\cline{3-14}      &       & SA-Aver & NA-Aver & FEs-Aver & SA-Aver & NA-Aver & FEs-Aver & SA-Aver & NA-Aver & FEs-Aver & SA-Aver & NA-Aver & FEs-Aver \bigstrut\\
\hline
\multirow{5}[2]{*}{BMS} & 1000  & \textbf{100.0\%} & \textbf{100.0\%} & 22224 & \textbf{100.0\%} & \textbf{100.0\%}  & 39671  & 33.3\% & 73.3\% & 10024  & 33.3\% & 73.3\% & 5114  \bigstrut[t]\\
      & 2000  & \textbf{100.0\%} & \textbf{100.0\%} & 45389 & \textbf{100.0\%} & \textbf{100.0\%}  & 80836  & 33.4\% & 70.0\% & 20709  & 33.3\% & 70.0\% & 10446  \\
      & 3000  & \textbf{100.0\%} & \textbf{100.0\%} & 67492 & \textbf{100.0\%} & \textbf{100.0\%}  & 121048  & 33.5\% & 68.9\% & 32038  & 33.3\% & 68.9\% & 15842    \\
      & 4000  & \textbf{100.0\%} & \textbf{100.0\%} & 92735 & \textbf{100.0\%} & 90.5\%  & 160865  & 33.3\% & 68.3\% & 42817  & 33.3\% & 68.3\% & 21262     \\
      & 5000  &\textbf{100.0\%} & \textbf{100.0\%} & 115758 & \textbf{100.0\%} & 93.8\%  & 201110  & 33.3\% & 68\% & 54424  & 33.3\% & 68\% & 26693 \bigstrut[b]\\
\hline
\end{tabular}%

    }
  \label{tab GroupResults1}%
  \vspace{-0.5cm}
\end{table*}%

\subsubsection{Decomposition on 1000-dimensional BMS Problems}
The decomposition experiment results on 1000-dimensional problems are presented in Table \ref{tab GroupResults}.

Both CSG and GSG effectively group all variables of the 15 benchmark functions within BMS.
However, there is a difference in CSG and GSG. 
CSG categorizes separable variables into additively separable, multiplicatively separable, and generally separable variables, while GSG groups all separable variables into a single group.

For functions $f_1$-$f_9$, which are fully separable, RDG2 and MDG can accurately identify all additively separable variables, but incorrectly classify multiplicatively and composite separable variables as non-separable variables. 
The SA of RDG2 and MDG for $f_1$-$f_{15}$ represents the proportion of additively separable variables to the total separable variables.

When dealing with separable variables, binary-based RDG2 methods typically group multiple non-additively separable variable groups as a single group, whereas MDG may generate multiple groups of variables.
This difference may be attributed to merge-based grouping methods tending to merge multiple separable variables into a group when they are unable to handle certain types of separable variables.
When the decomposing-based grouping method cannot identify some separable variables, the size of the variable group remains unchanged.

Functions $f_{10}$-$f_{13}$ contain multiple independent partially additively separable subcomponents, except for separable variables, and RDG2 and MDG can accurately group them with 100\% grouping accuracy.
However, RDG2 and MDG fail to identify groups of partially non-additively separable variables in functions $f_{14}$-$f_{15}$.
When dealing with non-separable variables in $f_{14}$-$f_{15}$, RDG2 and MDG tend to identify multiple partially non-additively separable variable groups as a single non-separable variable group.

\subsubsection{Decomposition on Higher-dimensional BMS Problems}
To test the scalability of CSG, we compare the decomposition accuracy and computational resource consumption of CSG, GSG, RDG2, and MDG on the BMS benchmark sets with dimensions ranging from 1000 to 5000. 
The decomposition results for higher dimensions are shown in Table \ref{tab GroupResults1}.

The results show that CSG can accurately decompose problems with dimensions ranging from 1000 to 5000.
CSG employs a computationally efficient approach to identify separable variables, resulting in an average computational resource consumption that is about half of that of GSG on BMS functions with dimensions ranging from 1000 to 5000.

When dealing with non-separable variables in high-dimensional problems, GSG does not accurately decompose all problems.
This may be because high-dimensional problems exhibit more intricate properties and possess a more complex fitness landscape.
Furthermore, when GSG is used to handle high-dimensional problems, the parameter settings may need further consideration. 
However, CSG effectively reduces the complexity of the problem by separating the separable variables beforehand, resulting in a lower-dimensional problem consisting only of non-separable variables.
CSG achieves higher accuracy than GSG when grouping non-separable variables in high-dimensional problems.

RDG2 and MDG also exhibit lower accuracy than CSG when dealing with high-dimensional problems. 
This is because RDG2 and MDG can only decompose additively separable problems.

\subsubsection{Decomposition on Classical Problems}
In addition to the BMS benchmark, we have conducted comparative experiments on the CEC and BNS benchmarks, both having a dimensionality of 1000.
Due to the page limit, detailed decomposition results for the CEC and BNS benchmarks are presented in Section S.II of the Supplementary Material.

Overall, CSG consumes fewer computational resources than those by GSG on 32 out of a total of 47 benchmark problems.
Since CSG utilizes a fixed amount of computational resources to detect separable variables, it consumes more computational resources than GSG does on the problems without separable variables.
MDG and RDG2 can efficiently decompose additively separable problems, but both fail to effectively address non-additively separable problems in BNS.

\subsubsection{Computational Resource Consumption}
We use the computational resource consumption of BMS functions with 1000 dimensions as an example.
The cutting-edge DG-series algorithms, which are based on the finite difference principle, have significantly reduced their computational costs. 
RDG2 requires approximately $10^3$ FEs for fully separable problems such as $f_1$-$f_9$ and over ten thousand FEs for partially separable problems like $f_{10}$-$f_{15}$.
However, MDG requires around $10^3$ FEs for all problems in BMS.

For 1000-dimensional problems in BMS, GSG requires an average of about $4 \times 10^4$ FEs, with three-quarters of the FEs used in the minimum points search phase.
In contrast, CSG consumes approximately $2.4 \times 10^4$ FEs on average for variable grouping, which is roughly half of the computational resource consumption of GSG.
Therefore, the computational complexity of CSG is significantly lower than that of GSG does.

\begin{table} [htbp]
  \centering
  \caption{The computational resource consumption for grouping non-separable variables on $f_{10}$-$f_{15}$ in BMS}
  \tabcolsep=0.5cm
  \renewcommand{\arraystretch}{1.0} 
    \begin{tabular}{cccc}
    \hline
    Benchmark & Problems & CSG   & CSG-GSG \bigstrut\\
    \hline
    \multirow{6}[2]{*}{BMS} & $f_{10}$   & 10443 & 13428 \bigstrut[t]\\
          &  $f_{11}$     & 2247  & 5508 \\
          &  $f_{12}$     & 5769  & 6900 \\
          &  $f_{13}$     & 5763  & 8472 \\
          &  $f_{14}$     & 5751  & 7038 \\
          &  $f_{15}$     & 5625  & 6660 \bigstrut[b]\\
    \hline
    \end{tabular}%
  \label{tab FEsForNSV}%
\vspace{-0.5cm}
\end{table}%

To evaluate the efficiency of CSG and GSG in grouping non-separable variables, a comparative experiment is designed using benchmark functions $f_{10}$-$f_{15}$, which contain non-separable variables. 
This experiment aims to assess the performance of NVG. 
After performing the minimum points search and identifying generally separable variables using CSG, GSG is employed to group the remaining non-separable variables.
This combined approach is denoted as CSG-GSG.
The number of FEs required by CSG and CSG-GSG for grouping non-separable variables following the minimum points search stage is specifically measured. 
The computational resources needed by CSG and CSG-GSG for grouping non-separable variables are statistically obtained, and the results are presented in Table \ref{tab FEsForNSV}. 
The number of FEs required by CSG to decompose $f_{10}$-$f_{15}$ consistently outperforms CSG-GSG. 
These results demonstrate that CSG has lower computational complexity than CSG-GSG in grouping non-separable variables, indicating that NVG is an efficient approach for this task.


\begin{table*}[t]
  \centering
  \caption{The optimization results of various grouping methods on 1000-D BMS problems over 25 independent runs. Three subtables record the optimization results for fes of $1.2 \times 10^5$, $6 \times 10^5$ and $3 \times 10^6$, respectively.}
  \resizebox{\linewidth}{!}{
       \renewcommand{\arraystretch}{01}
 \renewcommand{\tabcolsep}{10pt}
\begin{tabular}{cc|ccc|ccc|ccc|ccc}
\hline
\multicolumn{14}{c}{FEs = $1.2 \times 10^5$}  \bigstrut\\
\hline
\multirow{2}[4]{*}{Benchmark} & \multirow{2}[4]{*}{Problem} & \multicolumn{3}{c|}{CSG} & \multicolumn{3}{c|}{GSG} & \multicolumn{3}{c|}{RDG2} & \multicolumn{3}{c}{MDG} \bigstrut\\
\cline{3-14}      &       & Mean  & Median & Std   & Mean  & Median & Std   & Mean  & Median & Std   & Mean  & Median & Std \bigstrut\\
\hline
\multirow{14}[2]{*}{BMS} & $f_{1}$ & \textbf{3.47E+03} & \textbf{3.47E+03} & \textbf{5.40E+01} & 4.09E+03 & 4.09E+03 & 6.53E+01 & 4.35E+03 & 4.32E+03 & 1.32E+02 & 4.22E+03 & 4.24E+03 & 1.44E+02  \bigstrut[t]\\
      & $f_{2}$ & \textbf{2.23E+02} & \textbf{2.24E+02} & \textbf{4.98E+00} & 4.26E+02 & 4.25E+02 & 9.92E+00 & 6.17E+02 & 6.12E+02 & 4.96E+01 & 5.75E+02 & 5.74E+02 & 5.21E+01  \\
      & $f_{3}$ & \textbf{2.96E+03} & \textbf{2.96E+03} & \textbf{5.23E+01} & 3.16E+03 & 3.16E+03 & 6.00E+01 & 3.36E+03 & 3.35E+03 & 6.28E+01 & 6.08E+03 & 6.10E+03 & 1.03E+02 \\
      & $f_{4}$    & \textbf{5.95E+03} & \textbf{5.96E+03} & \textbf{1.39E+02} & 7.94E+03 & 7.97E+03 & 1.69E+02 & 4.66E+04 & 4.66E+04 & 1.80E+03 & 4.67E+04 & 4.69E+04 & 2.23E+03 \\
      & $f_{5}$    & 5.57E+01 & 5.57E+01 & 2.82E-01 & 5.80E+01 & 5.79E+01 & 2.07E-01 & \textbf{5.17E+01} & \textbf{5.17E+01} & \textbf{6.31E-01} & 1.14E+02 & 1.15E+02 & 5.40E+00 \\
      & $f_{6}$    & \textbf{5.40E+03} & \textbf{5.40E+03} & \textbf{1.68E+02} & 7.28E+03 & 7.27E+03 & 1.94E+02 & 7.40E+03 & 7.30E+03 & 6.67E+02 & 6.82E+03 & 6.74E+03 & 5.22E+02 \\
      & $f_{7}$    & 8.72E+03 & 8.72E+03 & 6.47E+01 & 9.02E+03 & 9.00E+03 & 6.55E+01 & \textbf{8.44E+03} & \textbf{8.44E+03} & \textbf{7.02E+01} & 1.10E+04 & 1.10E+04 & 9.50E+01 \\
      & $f_{8}$    & \textbf{5.35E+05} & \textbf{5.36E+05} & \textbf{1.60E+04} & 8.94E+05 & 8.95E+05 & 3.31E+04 & 1.69E+06 & 1.67E+06 & 7.42E+04 & 1.61E+06 & 1.60E+06 & 8.15E+04 \\
      & $f_{9}$    & \textbf{7.39E+03} & \textbf{7.40E+03} & \textbf{4.33E+01} & 7.68E+03 & 7.69E+03 & 4.82E+01 & 9.43E+03 & 9.39E+03 & 2.49E+02 & 2.46E+04 & 2.43E+04 & 2.03E+03 \\
      & $f_{10}$   & \textbf{4.71E+03} & \textbf{4.70E+03} & \textbf{1.77E+02} & 6.71E+03 & 6.71E+03 & 1.78E+02 & 1.55E+04 & 1.54E+04 & 1.09E+03 & 1.32E+04 & 1.31E+04 & 8.51E+02 \\
      & $f_{11}$   & 7.18E+03 & 7.19E+03 & 5.37E+01 & 7.42E+03 & 7.43E+03 & 6.49E+01 & \textbf{6.81E+03} & \textbf{6.81E+03} & \textbf{4.87E+01} & 8.64E+03 & 8.65E+03 & 7.16E+01 \\
      & $f_{12}$   & 1.01E+04 & 1.01E+04 & 7.38E+01 & 1.04E+04 & 1.04E+04 & 7.79E+01 & \textbf{9.83E+03} & \textbf{9.81E+03} & \textbf{7.55E+01} & 1.12E+04 & 1.13E+04 & 8.53E+01 \\
      & $f_{13}$   & 9.17E+03 & 9.17E+03 & 1.09E+02 & 9.59E+03 & 9.61E+03 & 1.52E+02 & \textbf{8.57E+03} & \textbf{8.55E+03} & \textbf{1.03E+02} & 1.05E+04 & 1.05E+04 & 1.54E+02 \\
      & $f_{14}$   & \textbf{3.39E+03} & \textbf{3.39E+03} & \textbf{8.66E+01} & 4.55E+03 & 4.54E+03 & 1.50E+02 & 1.35E+04 & 1.36E+04 & 9.86E+02 & 1.23E+04 & 1.21E+04 & 8.40E+02 \\
      & $f_{15}$   & 7.52E+03 & 7.50E+03 & 6.37E+01 & 7.75E+03 & 7.75E+03 & 5.26E+01 & \textbf{6.80E+03} & \textbf{6.79E+03} & \textbf{6.52E+01} & 8.48E+03 & 8.48E+03 & 6.94E+01 \\
\hline
\multicolumn{2}{c|}{Number of firsts} & \multicolumn{3}{c|}{9} & \multicolumn{3}{c|}{0} & \multicolumn{3}{c|}{6} & \multicolumn{3}{c}{0} \bigstrut\\
\hline
\hline
\multicolumn{14}{c}{FEs = $6 \times 10^5$}  \bigstrut\\
\hline
\multirow{2}[4]{*}{Benchmark} & \multirow{2}[4]{*}{Problem} & \multicolumn{3}{c|}{CSG} & \multicolumn{3}{c|}{GSG} & \multicolumn{3}{c|}{RDG2} & \multicolumn{3}{c}{MDG} \bigstrut\\
\cline{3-14}      &       & Mean  & Median & Std   & Mean  & Median & Std   & Mean  & Median & Std   & Mean  & Median & Std \bigstrut\\
\hline
\multirow{14}[2]{*}{BMS} & $f_{1}$ & \textbf{1.12E+03} & \textbf{1.12E+03} & \textbf{2.33E+01} & 1.19E+03 & 1.19E+03 & 2.91E+01 & 1.37E+03 & 1.38E+03 & 4.08E+01 & 1.25E+03 & 1.25E+03 & 3.87E+01 \bigstrut[t]\\
    & $f_{2}$    & \textbf{1.00E+02} & \textbf{1.00E+02} & \textbf{5.57E-05} & 1.00E+02 & 1.00E+02 & 1.26E-04 & 1.24E+02 & 1.23E+02 & 2.58E+00 & 1.25E+02 & 1.25E+02 & 3.75E+00 \\
    & $f_{3}$    & \textbf{9.11E+02} & \textbf{9.13E+02} & \textbf{2.29E+01} & 9.31E+02 & 9.33E+02 & 3.18E+01 & 1.12E+03 & 1.13E+03 & 4.32E+01 & 3.69E+03 & 3.70E+03 & 6.54E+01 \\
    & $f_{4}$    & \textbf{2.29E+00} & \textbf{2.27E+00} & \textbf{1.61E-01} & 3.16E+00 & 3.16E+00 & 1.33E-01 & 7.41E+03 & 7.40E+03 & 8.60E+02 & 7.43E+03 & 7.42E+03 & 6.57E+02 \\
    & $f_{5}$    & \textbf{1.28E+00} & \textbf{1.28E+00} & \textbf{1.88E-02} & 1.35E+00 & 1.35E+00 & 1.44E-02 & 2.04E+01 & 2.11E+01 & 4.34E+00 & 6.40E+01 & 6.40E+01 & 5.20E-01 \\
    & $f_{6}$    & \textbf{3.30E+01} & \textbf{3.33E+01} & \textbf{1.16E+00} & 3.61E+01 & 3.62E+01 & 1.29E+00 & 2.04E+02 & 1.86E+02 & 7.12E+01 & 2.17E+02 & 1.76E+02 & 1.24E+02 \\
    & $f_{7}$    & 5.27E+03 & 5.28E+03 & 6.66E+01 & 5.35E+03 & 5.35E+03 & 7.33E+01 & \textbf{4.13E+03} & \textbf{4.14E+03} & \textbf{1.58E+02} & 7.49E+03 & 7.50E+03 & 7.49E+01 \\
    & $f_{8}$    & \textbf{1.23E+03} & \textbf{1.24E+03} & \textbf{4.30E+01} & 1.36E+03 & 1.36E+03 & 4.94E+01 & 9.32E+04 & 9.06E+04 & 1.54E+04 & 8.83E+04 & 8.57E+04 & 1.02E+04 \\
    & $f_{9}$    & \textbf{1.54E+03} & \textbf{1.54E+03} & \textbf{7.07E+01} & 1.71E+03 & 1.72E+03 & 7.53E+01 & 4.78E+03 & 4.75E+03 & 1.60E+02 & 9.85E+03 & 9.83E+03 & 3.56E+02 \\
    & $f_{10}$   & \textbf{2.54E+02} & \textbf{2.53E+02} & \textbf{6.90E+00} & 2.64E+02 & 2.62E+02 & 1.01E+01 & 9.75E+02 & 9.72E+02 & 8.45E+01 & 9.34E+02 & 9.11E+02 & 8.28E+01 \\
    & $f_{11}$   & \textbf{2.42E+03} & \textbf{2.42E+03} & \textbf{9.55E+01} & 2.53E+03 & 2.50E+03 & 1.32E+02 & \textbf{2.35E+03} & \textbf{2.32E+03} & \textbf{2.84E+02} & 5.77E+03 & 5.76E+03 & 7.31E+01 \\
    & $f_{12}$   & \textbf{5.37E+03} & \textbf{5.36E+03} & \textbf{1.27E+02} & 5.48E+03 & 5.46E+03 & 1.13E+02 & 5.47E+03 & 5.46E+03 & 2.03E+02 & 7.69E+03 & 7.69E+03 & 8.66E+01 \\
    & $f_{13}$   & \textbf{3.80E+03} & \textbf{3.81E+03} & \textbf{9.80E+01} & 3.97E+03 & 3.96E+03 & 1.13E+02 & 3.88E+03 & 3.90E+03 & 1.67E+02 & 6.38E+03 & 6.38E+03 & 7.76E+01 \\
    & $f_{14}$   & \textbf{2.47E+02} & \textbf{2.48E+02} & \textbf{6.61E+00} & 2.52E+02 & 2.52E+02 & 5.29E+00 & 9.31E+02 & 9.14E+02 & 9.48E+01 & 8.59E+02 & 8.64E+02 & 9.22E+01 \\
    & $f_{15}$   & 3.43E+03 & 3.43E+03 & 1.25E+02 & 3.57E+03 & 3.58E+03 & 1.13E+02 & \textbf{2.38E+03} & \textbf{2.35E+03} & \textbf{3.06E+02} & 5.46E+03 & 5.46E+03 & 6.56E+01 \\
\hline
\multicolumn{2}{c|}{Number of firsts} & \multicolumn{3}{c|}{13} & \multicolumn{3}{c|}{0} & \multicolumn{3}{c|}{3} & \multicolumn{3}{c}{0} \bigstrut\\
\hline
\hline
\multicolumn{14}{c}{FEs = $3 \times 10^6$}  \bigstrut\\
\hline
\multirow{2}[4]{*}{Benchmark} & \multirow{2}[4]{*}{Problem} & \multicolumn{3}{c|}{CSG} & \multicolumn{3}{c|}{GSG} & \multicolumn{3}{c|}{RDG2} & \multicolumn{3}{c}{MDG} \bigstrut\\
\cline{3-14}      &       & Mean  & Median & Std   & Mean  & Median & Std   & Mean  & Median & Std   & Mean  & Median & Std \bigstrut\\
\hline
\multirow{14}[2]{*}{BMS} & $f_{1}$ & \textbf{3.02E+01} & \textbf{3.15E+01} & \textbf{4.86E+00} & \textbf{3.02E+01} & \textbf{2.96E+01} & \textbf{7.33E+00} & 1.04E+02 & 1.05E+02 & 9.23E+00 & 1.05E+02 & 1.07E+02 & 8.34E+00 \bigstrut[t]\\
    &$f_{2}$    & \textbf{1.00E+02} & \textbf{1.00E+02} & \textbf{5.13E-14} & \textbf{1.00E+02} & \textbf{1.00E+02} & \textbf{4.78E-14} & 1.00E+02 & 1.00E+02 & 1.90E-01 & 1.00E+02 & 1.00E+02 & 1.55E-01 \\
   & $f_{3}$    & \textbf{1.65E+01} & \textbf{1.69E+01} & \textbf{3.28E+00} & \textbf{1.78E+01} & \textbf{1.79E+01} & \textbf{3.41E+00} & 9.46E+01 & 9.58E+01 & 9.34E+00 & 1.52E+03 & 1.53E+03 & 4.87E+01 \\
  &  $f_{4}$    & \textbf{8.65E-22} & \textbf{2.83E-22} & \textbf{1.64E-21} & 3.26E-21 & 9.49E-22 & 8.75E-21 & 3.34E+02 & 2.70E+02 & 1.98E+02 & 2.53E+02 & 2.37E+02 & 7.63E+01 \\
  &  $f_{5}$    & \textbf{1.00E+00} & \textbf{1.00E+00} & \textbf{1.07E-15} & \textbf{1.00E+00} & \textbf{1.00E+00} & \textbf{1.02E-15} & 1.00E+00 & 1.00E+00 & 8.38E-06 & 4.29E+01 & 4.31E+01 & 1.27E+00 \\
 &  $f_{6}$    & \textbf{1.13E+00} & \textbf{1.12E+00} & \textbf{1.29E-02} & \textbf{1.13E+00} & \textbf{1.13E+00} & \textbf{1.65E-02} & 4.84E+00 & 4.47E+00 & 8.94E-01 & 3.89E+00 & 3.90E+00 & 2.49E-01 \\
  &  $f_{7}$    & \textbf{1.41E+02} & \textbf{1.43E+02} & \textbf{7.17E+00} & \textbf{1.40E+02} & \textbf{1.42E+02} & \textbf{6.66E+00} & 1.92E+02 & 1.87E+02 & 2.95E+01 & 2.42E+03 & 2.46E+03 & 3.12E+02 \\
 &  $f_{8}$    & \textbf{1.09E+02} & \textbf{1.09E+02} & \textbf{1.10E+00} & \textbf{1.09E+02} & \textbf{1.08E+02} & \textbf{9.89E-01} & 2.56E+02 & 2.53E+02 & 1.86E+01 & 2.54E+02 & 2.53E+02 & 1.87E+01 \\
 &   $f_{9}$   & \textbf{1.04E+02} & \textbf{1.04E+02} & \textbf{7.69E-01} & \textbf{1.04E+02} & \textbf{1.04E+02} & \textbf{4.36E-01} & 4.12E+02 & 2.87E+02 & 3.02E+02 & 4.64E+03 & 4.65E+03 & 1.12E+02 \\
    & $f_{10}$   & \textbf{1.03E+02} & \textbf{1.03E+02} & \textbf{4.89E-01} & \textbf{1.03E+02} & \textbf{1.03E+02} & \textbf{4.95E-01} & 1.42E+02 & 1.42E+02 & 5.19E+00 & 1.40E+02 & 1.42E+02 & 3.63E+00 \\
    & $f_{11}$   & \textbf{1.33E+02} & \textbf{1.34E+02} & \textbf{4.32E+00} & \textbf{1.35E+02} & \textbf{1.35E+02} & \textbf{3.75E+00} & 1.54E+02 & 1.56E+02 & 7.81E+00 & 7.54E+02 & 7.01E+02 & 1.69E+02 \\
    & $f_{12}$   & 1.59E+03 & 1.59E+03 & 4.34E+01 & 1.59E+03 & 1.59E+03 & 3.13E+01 & \textbf{1.45E+03} & \textbf{1.45E+03} & \textbf{4.84E+01} & 2.33E+03 & 2.19E+03 & 3.32E+02 \\
   & $f_{13}$   & \textbf{1.45E+02} & \textbf{1.47E+02} & \textbf{6.25E+00} & \textbf{1.48E+02} & \textbf{1.49E+02} & \textbf{5.52E+00} & 1.52E+02 & 1.50E+02 & 7.32E+00 & 8.76E+02 & 7.54E+02 & 2.73E+02 \\
    & $f_{14}$   & \textbf{5.65E+01} & \textbf{5.62E+01} & \textbf{3.04E+00} & \textbf{5.80E+01} & \textbf{5.83E+01} & \textbf{3.08E+00} & 1.72E+02 & 1.70E+02 & 8.52E+00 & 1.70E+02 & 1.69E+02 & 7.49E+00 \\
    & $f_{15}$   & \textbf{1.54E+02} & \textbf{1.54E+02} & \textbf{5.31E+00} & \textbf{1.54E+02} & \textbf{1.54E+02} & \textbf{6.08E+00} & 2.02E+02 & 2.02E+02 & 5.95E+00 & 6.01E+02 & 5.47E+02 & 1.92E+02 \\
\hline
\multicolumn{2}{c|}{Number of firsts} & \multicolumn{3}{c|}{14} & \multicolumn{3}{c|}{13} & \multicolumn{3}{c|}{1} & \multicolumn{3}{c}{0} \bigstrut\\
\hline
\end{tabular}%
    }
  \label{tab OptimizationResults1}%
  \vspace{-0.44cm}
\end{table*}%

\begin{figure*}[htbp]
\centering
 \captionsetup{labelfont={color=black}}
\subfigcapskip=-6pt
\setlength{\abovecaptionskip}{-2pt}
 \subfigure[{$f_9$ with $1.2\times{10}^5$ FEs}]{
 \includegraphics[width=4.5cm]{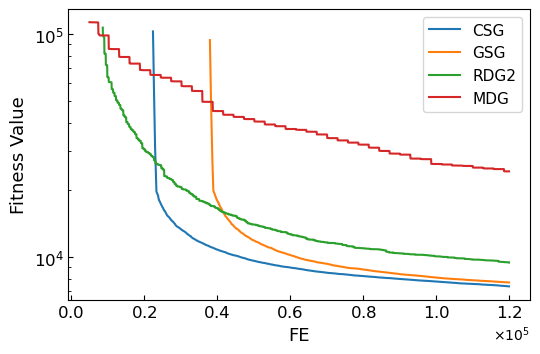}
 }
  \subfigure[{$f_9$ with $6\times{10}^5$ FEs}]{
 \includegraphics[width=4.5cm]{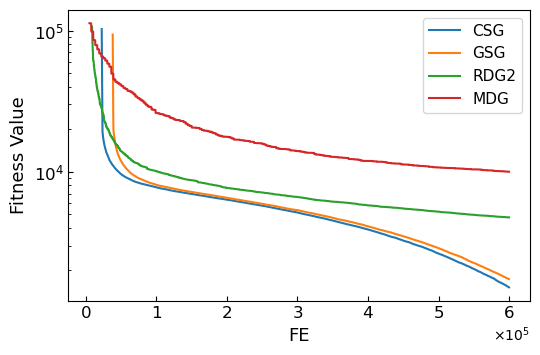}
 }
  \subfigure[{$f_9$ with $3\times{10}^6$ FEs}]{
 \includegraphics[width=4.5cm]{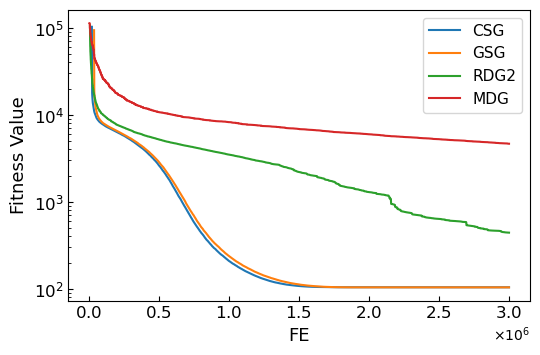}
 }
 \vspace{-0.3cm}
 
 \subfigure[{$f_{11}$ with $1.2\times{10}^5$ FEs}]{
 \includegraphics[width=4.5cm]{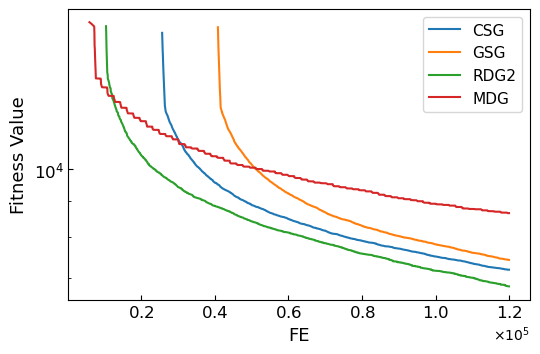}
 }
  \subfigure[{$f_{11}$ with $6\times{10}^5$ FEs}]{
 \includegraphics[width=4.5cm]{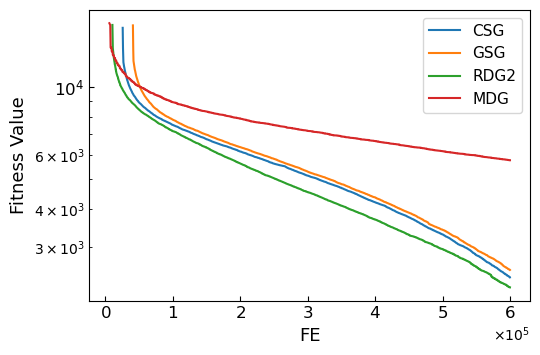}
 }
   \subfigure[{$f_{11}$ with $3\times{10}^6$ FEs}]{
 \includegraphics[width=4.5cm]{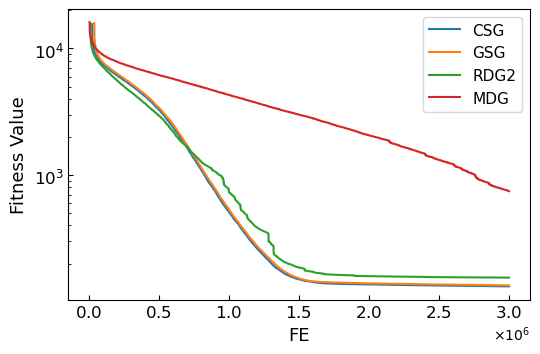}
 }
 \caption{{Convergence curves for the CSG, GSG, RDG2, and MDG methods when embedded into the DECC framework to solve the 1000-D $f_9$ and $f_{11}$ in BMS benchmark with the different termination conditions. The horizontal axis represents the number of FEs used in the decomposition and optimization process. The vertical axis represents the average fitness value.}}
 \label{fcc_mode_classification}
 \vspace{-0.5cm}
\end{figure*}

\begin{table*}[t]
  \centering
  \caption{The optimization results of various grouping methods on 5000-D BMS problems over 10 independent runs. Three subtables record the optimization results for fes of $6 \times 10^5$, $3 \times 10^6$ and $1.5 \times 10^7$, respectively.}
  \resizebox{\linewidth}{!}{
       \renewcommand{\arraystretch}{01}
 \renewcommand{\tabcolsep}{10pt}
\begin{tabular}{cc|ccc|ccc|ccc|ccc}
\hline
\multicolumn{14}{c}{FEs = $6$ $\times$ $10^5$}\bigstrut\\
\hline
\multirow{2}[4]{*}{Benchmark} & \multirow{2}[4]{*}{Problem} & \multicolumn{3}{c|}{CSG} & \multicolumn{3}{c|}{GSG} & \multicolumn{3}{c|}{RDG2} & \multicolumn{3}{c}{MDG} \bigstrut\\
\cline{3-14}      &       & Mean  & Median & Std   & Mean  & Median & Std   & Mean  & Median & Std   & Mean  & Median & Std \bigstrut\\
\hline
\multirow{14}[2]{*}{BMS} & $f_{1}$ & \textbf{5.98E+04} & \textbf{5.98E+04} & \textbf{5.57E+02} & 7.30E+04 & 7.31E+04 & 8.35E+02 & 5.92E+06 & 6.00E+06 & 4.93E+05 & 6.09E+06 & 6.07E+06 & 3.82E+05 \bigstrut[t]\\
    & $f_{2}$    & \textbf{4.29E+02} & \textbf{4.28E+02} & \textbf{5.94E+00} & 9.71E+02 & 9.69E+02 & 1.91E+01 & 3.98E+03 & 3.92E+03 & 2.31E+02 & 3.88E+03 & 3.86E+03 & 3.31E+02 \\
    & $f_{3}$    & \textbf{9.98E+04} & \textbf{9.98E+04} & \textbf{1.21E+02} & 1.01E+05 & 1.01E+05 & 1.91E+02 & 1.02E+05 & 1.02E+05 & 1.37E+02 & 1.14E+05 & 1.14E+05 & 2.97E+02 \\
    & $f_{4}$    & \textbf{1.63E+04} & \textbf{1.64E+04} & \textbf{2.12E+02} & 2.10E+04 & 2.09E+04 & 2.77E+02 & 2.42E+05 & 2.43E+05 & 4.54E+03 & 2.38E+05 & 2.37E+05 & 4.62E+03 \\
    & $f_{5}$    & \textbf{7.17E+01} & \textbf{7.17E+01} & \textbf{1.06E-01} & 7.36E+01 & 7.36E+01 & 1.57E-01 & 7.93E+01 & 7.93E+01 & 3.19E-01 & 2.52E+02 & 2.50E+02 & 2.75E+01 \\
    & $f_{6}$    & \textbf{9.23E+04} & \textbf{9.23E+04} & \textbf{9.42E+02} & 1.19E+05 & 1.19E+05 & 1.33E+03 & 4.70E+05 & 4.68E+05 & 2.98E+04 & 4.98E+05 & 5.00E+05 & 4.83E+04 \\
    & $f_{7}$    & \textbf{2.10E+04} & \textbf{2.10E+04} & \textbf{1.84E+02} & 2.23E+04 & 2.23E+04 & 1.29E+02 & 2.60E+04 & 2.60E+04 & 1.88E+02 & 4.33E+04 & 4.36E+04 & 1.38E+03 \\
    & $f_{8}$    & \textbf{1.28E+04} & \textbf{1.28E+04} & \textbf{1.99E+02} & 1.85E+04 & 1.85E+04 & 3.11E+02 & 1.97E+05 & 1.96E+05 & 6.02E+03 & 1.90E+05 & 1.91E+05 & 8.14E+03 \\
    & $f_{9}$    & \textbf{9.45E+03} & \textbf{9.44E+03} & \textbf{1.55E+01} & 9.78E+03 & 9.79E+03 & 2.33E+01 & 2.06E+04 & 2.08E+04 & 5.79E+02 & 4.36E+04 & 4.38E+04 & 2.75E+03 \\
    & $f_{10}$   & 2.00E+05 & 1.99E+05 & 1.07E+04 & \textbf{1.49E+05} & \textbf{1.47E+05} & \textbf{7.59E+03} & 1.99E+05 & 2.00E+05 & 5.94E+03 & 1.81E+05 & 1.82E+05 & 4.30E+03 \\
    & $f_{11}$   & \textbf{1.73E+04} & \textbf{1.73E+04} & \textbf{2.51E+02} & 1.80E+04 & 1.80E+04 & 3.37E+02 & 1.83E+04 & 1.84E+04 & 1.67E+02 & 3.07E+04 & 3.10E+04 & 5.16E+02 \\
    & $f_{12}$   & \textbf{2.95E+04} & \textbf{2.95E+04} & \textbf{1.41E+02} & 3.07E+04 & 3.08E+04 & 1.59E+02 & 3.23E+04 & 3.23E+04 & 2.03E+02 & 4.35E+04 & 4.34E+04 & 4.27E+02 \\
    & $f_{13}$   & \textbf{1.72E+04} & \textbf{1.72E+04} & \textbf{9.44E+01} & 1.80E+04 & 1.80E+04 & 7.64E+01 & 2.15E+04 & 2.15E+04 & 1.77E+02 & 3.03E+04 & 3.05E+04 & 4.00E+02 \\
    & $f_{14}$   & \textbf{1.25E+04} & \textbf{1.24E+04} & \textbf{9.06E+01} & 1.40E+04 & 1.40E+04 & 1.87E+02 & 3.19E+04 & 3.18E+04 & 6.54E+02 & 3.12E+04 & 3.11E+04 & 4.78E+02 \\
    & $f_{15}$   & \textbf{2.54E+04} & \textbf{2.54E+04} & \textbf{1.27E+02} & 2.62E+04 & 2.62E+04 & 1.00E+02 & 2.74E+04 & 2.74E+04 & 1.81E+02 & 3.97E+04 & 3.94E+04 & 7.40E+02 \\
\hline
\multicolumn{2}{c|}{Number of firsts} & \multicolumn{3}{c|}{14} & \multicolumn{3}{c|}{1} & \multicolumn{3}{c|}{0} & \multicolumn{3}{c}{0} \bigstrut\\
\hline
\hline
\multicolumn{14}{c}{FEs = $3 \times 10^6$} \bigstrut\\
\hline
\multirow{2}[4]{*}{Benchmark} & \multirow{2}[4]{*}{Problem} & \multicolumn{3}{c|}{CSG} & \multicolumn{3}{c|}{GSG} & \multicolumn{3}{c|}{RDG2} & \multicolumn{3}{c}{MDG} \bigstrut\\
\cline{3-14}      &       & Mean  & Median & Std   & Mean  & Median & Std   & Mean  & Median & Std   & Mean  & Median & Std \bigstrut\\
\hline
\multirow{14}[2]{*}{BMS} & $f_{1}$& \textbf{1.98E+04} & \textbf{1.98E+04} & \textbf{2.14E+02} & 2.08E+04 & 2.08E+04 & 2.02E+02 & 1.71E+06 & 1.71E+06 & 1.61E+05 & 1.76E+06 & 1.74E+06 & 1.06E+05  \bigstrut[t]\\
    & $f_{2}$    & \textbf{1.00E+02} & \textbf{1.00E+02} & \textbf{5.27E-05} & 1.00E+02 & 1.00E+02 & 1.43E-04 & 8.88E+02 & 8.88E+02 & 4.13E+01 & 8.94E+02 & 8.76E+02 & 5.36E+01 \\
    & $f_{3}$    & \textbf{7.20E+04} & \textbf{7.20E+04} & \textbf{1.98E+02} & 7.26E+04 & 7.28E+04 & 2.90E+02 & 8.74E+04 & 8.74E+04 & 1.28E+02 & 1.00E+05 & 1.00E+05 & 1.60E+02 \\
    & $f_{4}$    & \textbf{6.92E+00} & \textbf{6.87E+00} & \textbf{2.09E-01} & 8.23E+00 & 8.27E+00 & 1.78E-01 & 1.20E+05 & 1.20E+05 & 5.28E+03 & 1.19E+05 & 1.19E+05 & 3.29E+03 \\
    & $f_{5}$    & \textbf{2.02E+00} & \textbf{2.03E+00} & \textbf{2.31E-02} & 2.17E+00 & 2.16E+00 & 2.54E-02 & 6.58E+01 & 6.57E+01 & 5.43E-01 & 1.21E+02 & 1.21E+02 & 1.81E+00 \\
    & $f_{6}$    & \textbf{8.48E+01} & \textbf{8.52E+01} & \textbf{1.46E+00} & 9.23E+01 & 9.21E+01 & 1.25E+00 & 1.14E+05 & 1.14E+05 & 1.64E+04 & 1.48E+05 & 1.18E+05 & 7.44E+04 \\
    & $f_{7}$    & \textbf{1.23E+04} & \textbf{1.23E+04} & \textbf{7.23E+01} & 1.24E+04 & 1.24E+04 & 3.61E+01 & 1.27E+04 & 1.27E+04 & 8.44E+01 & 2.43E+04 & 2.43E+04 & 3.48E+02 \\
    & $f_{8}$    & \textbf{6.03E+02} & \textbf{6.04E+02} & \textbf{9.07E+00} & 6.46E+02 & 6.45E+02 & 9.15E+00 & 6.09E+04 & 6.00E+04 & 4.00E+03 & 5.91E+04 & 6.02E+04 & 3.59E+03 \\
    & $f_{9}$    & \textbf{2.99E+03} & \textbf{2.99E+03} & \textbf{4.75E+01} & 3.19E+03 & 3.20E+03 & 5.39E+01 & 1.19E+04 & 1.18E+04 & 3.22E+02 & 2.12E+04 & 2.12E+04 & 7.10E+02 \\
    & $f_{10}$   & 5.35E+04 & 5.31E+04 & 3.17E+03 & \textbf{1.23E+04} & \textbf{1.23E+04} & \textbf{1.15E+03} & 4.93E+04 & 4.97E+04 & 2.10E+03 & 4.82E+04 & 4.85E+04 & 2.18E+03 \\
    & $f_{11}$   & 1.04E+04 & 1.04E+04 & 9.92E+01 & 1.06E+04 & 1.06E+04 & 9.97E+01 & \textbf{9.85E+03} & \textbf{9.84E+03} & \textbf{3.84E+01} & 1.84E+04 & 1.84E+04 & 2.28E+02 \\
    & $f_{12}$   & \textbf{2.02E+04} & \textbf{2.02E+04} & \textbf{9.07E+01} & 2.03E+04 & 2.03E+04 & 1.01E+02 & 2.05E+04 & 2.06E+04 & 1.52E+02 & 2.85E+04 & 2.85E+04 & 1.42E+02 \\
    & $f_{13}$   & \textbf{1.06E+04} & \textbf{1.07E+04} & \textbf{2.06E+01} & 1.07E+04 & 1.07E+04 & 4.43E+01 & 1.17E+04 & 1.17E+04 & 6.07E+01 & 1.81E+04 & 1.81E+04 & 1.48E+02 \\
    & $f_{14}$   & \textbf{5.20E+03} & \textbf{5.22E+03} & \textbf{8.68E+01} & 5.28E+03 & 5.25E+03 & 9.81E+01 & 1.41E+04 & 1.40E+04 & 4.14E+02 & 1.43E+04 & 1.43E+04 & 2.35E+02 \\
    & $f_{15}$   & \textbf{1.82E+04} & \textbf{1.82E+04} & \textbf{3.95E+01} & 1.83E+04 & 1.83E+04 & 4.29E+01 & 1.85E+04 & 1.85E+04 & 6.21E+01 & 2.75E+04 & 2.75E+04 & 1.39E+02 \\

\hline
\multicolumn{2}{c|}{Number of firsts} & \multicolumn{3}{c|}{13} & \multicolumn{3}{c|}{1} & \multicolumn{3}{c|}{1} & \multicolumn{3}{c}{0} \bigstrut\\
\hline
\hline
\multicolumn{14}{c}{FEs = $1.5 \times 10^7$}  \bigstrut\\
\hline
\multirow{2}[4]{*}{Benchmark} & \multirow{2}[4]{*}{Problem} & \multicolumn{3}{c|}{CSG} & \multicolumn{3}{c|}{GSG} & \multicolumn{3}{c|}{RDG2} & \multicolumn{3}{c}{MDG} \bigstrut\\
\cline{3-14}      &       & Mean  & Median & Std   & Mean  & Median & Std   & Mean  & Median & Std   & Mean  & Median & Std \bigstrut\\
\hline
\multirow{14}[2]{*}{BMS} & $f_{1}$ & \textbf{2.28E+03} & \textbf{2.28E+03} & \textbf{4.91E+01} & \textbf{2.30E+03} & \textbf{2.30E+03} & \textbf{4.67E+01} & 8.32E+04 & 8.28E+04 & 9.44E+03 & 8.74E+04 & 8.93E+04 & 6.84E+03 \bigstrut[t]\\
    & $f_{2}$    & \textbf{1.00E+02} & \textbf{1.00E+02} & \textbf{0.00E+00} & \textbf{1.00E+02} & \textbf{1.00E+02} & \textbf{0.00E+00} & 2.42E+02 & 2.38E+02 & 1.23E+01 & 2.46E+02 & 2.43E+02 & 7.40E+00 \\
    & $f_{3}$    & \textbf{5.20E+02} & \textbf{5.20E+02} & \textbf{2.02E+01} & \textbf{5.15E+02} & \textbf{5.10E+02} & \textbf{1.90E+01} & 7.71E+04 & 7.70E+04 & 3.43E+02 & 8.51E+04 & 8.51E+04 & 2.07E+02 \\
    & $f_{4}$    & \textbf{8.51E-21} & \textbf{5.53E-21} & \textbf{8.89E-21} & \textbf{1.02E-20} & \textbf{9.31E-21} & \textbf{7.00E-21} & 4.55E+04 & 4.53E+04 & 3.36E+03 & 4.58E+04 & 4.59E+04 & 3.30E+03 \\
    & $f_{5}$    & \textbf{1.00E+00} & \textbf{1.00E+00} & \textbf{0.00E+00} & \textbf{1.00E+00} & \textbf{1.00E+00} & \textbf{3.23E-15} & 3.34E+01 & 3.37E+01 & 3.08E+00 & 8.72E+01 & 8.71E+01 & 2.48E-01 \\
    & $f_{6}$    & \textbf{1.17E+00} & \textbf{1.17E+00} & \textbf{1.04E-02} & \textbf{1.17E+00} & \textbf{1.17E+00} & \textbf{8.16E-03} & 5.94E+03 & 4.86E+03 & 2.74E+03 & 1.23E+04 & 2.92E+03 & 2.32E+04 \\
    & $f_{7}$    & \textbf{1.09E+03} & \textbf{1.09E+03} & \textbf{1.03E+01} & \textbf{1.09E+03} & \textbf{1.09E+03} & \textbf{1.27E+01} & 7.95E+03 & 7.94E+03 & 7.38E+01 & 1.18E+04 & 1.17E+04 & 8.48E+01 \\
    & $f_{8}$    & \textbf{1.04E+02} & \textbf{1.04E+02} & \textbf{3.51E-01} & \textbf{1.04E+02} & \textbf{1.04E+02} & \textbf{2.63E-01} & 1.06E+04 & 1.05E+04 & 1.24E+03 & 5.91E+04 & 6.02E+04 & 3.59E+03 \\
    & $f_{9}$    & \textbf{1.05E+02} & \textbf{1.05E+02} & \textbf{2.31E-01} & \textbf{1.05E+02} & \textbf{1.05E+02} & \textbf{3.36E-01} & 7.60E+03 & 7.62E+03 & 6.34E+01 & 1.24E+04 & 1.23E+04 & 4.05E+02 \\
    & $f_{10}$   & 9.39E+03 & 9.49E+03 & 7.76E+02 & \textbf{6.83E+02} & \textbf{6.69E+02} & \textbf{7.10E+01} & 6.29E+03 & 5.84E+03 & 1.35E+03 & 7.01E+03 & 6.57E+03 & 1.34E+03 \\
    & $f_{11}$   & \textbf{1.36E+03} & \textbf{1.35E+03} & \textbf{3.49E+01} & \textbf{1.40E+03} & \textbf{1.40E+03} & \textbf{2.79E+01} & 6.86E+03 & 6.88E+03 & 7.23E+01 & 9.70E+03 & 9.70E+03 & 4.98E+01 \\
    & $f_{12}$   & \textbf{8.40E+03} & \textbf{8.41E+03} & \textbf{9.18E+01} & \textbf{8.36E+03} & \textbf{8.40E+03} & \textbf{1.18E+02} & 1.37E+04 & 1.37E+04 & 1.48E+02 & 1.79E+04 & 1.79E+04 & 1.16E+02 \\
    & $f_{13}$   & \textbf{1.06E+03} & \textbf{1.06E+03} & \textbf{5.29E+00} & \textbf{1.06E+03} & \textbf{1.05E+03} & \textbf{7.81E+00} & 7.44E+03 & 7.45E+03 & 4.61E+01 & 9.70E+03 & 9.71E+03 & 4.12E+01 \\
    & $f_{14}$   & \textbf{1.24E+03} & \textbf{1.25E+03} & \textbf{3.41E+01} & \textbf{1.24E+03} & \textbf{1.23E+03} & \textbf{3.77E+01} & 7.14E+03 & 7.10E+03 & 2.96E+02 & 7.14E+03 & 7.15E+03 & 1.78E+02 \\
    & $f_{15}$   & \textbf{5.94E+03} & \textbf{5.94E+03} & \textbf{8.16E+01} & \textbf{5.97E+03} & \textbf{5.97E+03} & \textbf{7.20E+01} & 1.49E+04 & 1.50E+04 & 1.07E+02 & 1.85E+04 & 1.85E+04 & 5.68E+01 \\

\hline
\multicolumn{2}{c|}{Number of firsts} & \multicolumn{3}{c|}{14} & \multicolumn{3}{c|}{15} & \multicolumn{3}{c|}{0} & \multicolumn{3}{c}{0} \bigstrut\\
\hline
\end{tabular}%
    }
  \label{tab OptimizationResults}%
  \vspace{-0.5cm}
\end{table*}%

\subsection{Comparison on Optimization}
Based on the grouping results obtained from CSG, GSG, RDG2, and MDG, optimization experiments are conducted on BMS.
The grouping outcomes are integrated into the CC framework for optimization, utilizing the self-adaptive differential evolution with neighborhood search (SaNSDE) \cite{SANSDE} as the selected optimizer.
Fully separable variables are grouped into subgroups of size 50 for optimization.
As per \cite{SANSDE}, the population size of the algorithms is set at 50.

\subsubsection{Optimization on 1000-dimensional Problems}
The optimization results are obtained through 25 independent runs.
Wilcoxon's rank sum test at the 0.05 significance level is performed on the optimization results, and the results with a significant advantage are highlighted in bold.
The number of FEs for each problem is set to $3 \times 10^6$ as the termination criterion for each optimization process.
The optimization results of each algorithm are recorded when FEs reach $1.2 \times 10^5$ and $6 \times 10^5$ to compare the performance of each algorithm during the optimization process \cite{CEC2010}.
The optimization results of the respective algorithms on 1000-dimensional problems are presented in Table \ref{tab OptimizationResults1}. 
The convergence curves for $f_9$ and $f_{11}$ are presented in Fig. 2.
Due to the page limit, the remaining convergence curves are provided in Section S.I of the Supplementary Material.
It can be observed that in the early optimization stage, CSG exhibits better optimization results than GSG does. 
Finally, CSG and GSG achieve the best similar optimization results due to the accurate decomposition.

At the beginning of the optimization process, when FEs is $1.2 \times 10^5$, CSG achieves 9 first-place results, while RDG2 achieves 6 first-place results. 
It is evident that CSG outperforms GSG in terms of optimization results. 
This can be attributed to CSG's consumption of fewer computational resources during the grouping stage and its earlier execution of the optimization stage. 

As the optimization stage progresses, the availability of computing resources increases, and the advantage of reduced computational resource consumption during the grouping stage of RDG2 gradually diminishes. 
When FEs reach $6 \times 10^5$, CSG achieves 13 first-place results, while RDG2 achieves 3 first-place results.

Towards the end of the optimization stage, when computational resources are sufficient, CSG and GSG produce comparable results due to similar grouping outcomes.
Thanks to the accurate grouping achieved by CSG and GSG, both CSG and GSG exhibit superior optimization results compared to RDG2 and MDG when FEs is $3 \times 10^6$, except for problem $f_{12}$.
The better performance of RDG2 on $f_{12}$ may be attributed to its accurate grouping of all non-separable variables.
Although RDG2 incorrectly identifies multiple separable variables as one non-separable variable group, this has a lesser negative impact on the optimization process.
RDG2 significantly outperforms MDG for certain problems such as $f_5$, $f_7$ and $f_{13}$.
MDG, based on a binary-tree-based iterative merging process, tends to group fully separable variables into numerous small subgroups, thereby affecting the optimization results of the MDG algorithm.

\subsubsection{Optimization on 5000-dimensional Problems}
The optimization experimental results on 5000-D problems are presented in Table \ref{tab OptimizationResults}.
{Due to the page limit, the remaining convergence curves are presented in Section S.I of the Supplementary Material.}
The number of FEs is set to $1.5 \times 10^7$  as the termination criterion for each optimization process, and the optimization results of each algorithm are recorded at FEs of $6 \times 10^5$ and $3 \times 10^6$.
The 5000-dimensional optimization results are obtained from 10 independent runs \cite{CSO}.

The results indicate that DECC with CSG significantly outperforms GSG, RDG2, and MDG in the early optimization stage when FEs is $6 \times 10^5$.
CSG achieves 14 first-place results in the initial optimization stage. 
This can be attributed to CSG's more accurate decomposition of all problems with lower computational resource consumption.

As the optimization progresses, the impact of computational resources on the grouping stage gradually diminishes, and GSG gradually achieves similar results to CSG. 
When FEs reach $1.5 \times 10^7$, CSG obtains 14 first-place results, while GSG obtains 15 first-place results. 
However, for $f_{10}$, GSG incorrectly identifies a large non-separable variable group as multiple smaller groups of non-separable variables, yet it achieves better optimization results compared to the accurate grouping of CSG. 
One possible explanation for this outcome is that the subgroup size in the CC framework of CSG is excessively large, negatively impacting the optimization process.

Overall, CSG significantly enhances the optimization efficiency of GSG in the early stages of optimization. 
Towards the end of the optimization process, the precise grouping achieved by both CSG and GSG leads to improved optimization results.

\subsubsection{Optimization on Classical Problems}
Due to the page limit, the detailed optimization results for CEC2010, CEC2013 and BNS benchmarks are shown in Section S.III of the Supplementary Material. 

Overall, CSG outperforms the compared algorithms in terms of optimization results across the three benchmarks.
In comparison to GSG, CSG utilizes fewer computational resources in the grouping phase, allowing it to enter the optimization stage earlier with more available computational resources. 
Consequently, CSG exhibits superior performance to GSG throughout the optimization process.
RDG2 and MDG achieve good optimization performance in dealing with additively separable problems in the CEC benchmark. 
However, both RDG2 and MDG are less effective in dealing with non-additively separable problems due to their inability to decompose such problems effectively.

\section{Conclusion}\label{section Conclusion}
In this article, we conduct a thorough analysis of the strengths and weaknesses of DG series methods and GSG algorithm, and we find that they complement each other. 
Therefore, we propose a composite algorithm as a novel approach to decompose LSGO problems, integrating the advantages of both methods while addressing their limitations. 
Based on these insights, we introduce a composite separability grouping (CSG) method that achieves more accurate problem decomposition with lower computational complexity.

In the proposed CSG, we utilize a one-to-all detection approach to sequentially identify additively separable variables, multiplicatively separable variables, and generally separable variables. 
Once all separable variables are detected, non-separable variables are grouped by analyzing their interactions with the formed groups of non-separable variables. 
We introduce the MSVD method, which effectively detects the more prevalent multiplicatively separable variables in the problem by leveraging historical information from previous stages to conserve computational resources. 
Additionally, we propose the NVG method to efficiently group non-separable variables through interaction detection with existing variable groups.

We then introduce a new LSGO benchmark, called BMS, which includes multiple types of separable variables within each benchmark function. 
BMS combines the fundamental functions used in BNS \cite{GSG} and CEC \cite{CEC2013}, addressing the limitation of previous benchmark functions that only contained one type of separable variables.

{In the experiments, we compare CSG with GSG, RDG2, and MDG on BMS, CEC and BNS benchmarks.}
The experimental results demonstrate that CSG significantly reduces computational resource consumption compared to GSG. 
Moreover, CSG outperforms RDG2 and MDG in terms of decomposition accuracy and optimization results. 
We also showcase the high efficiency of NVG in grouping non-separable variables, as well as a more efficient approach to handling separable variables in the optimization process.

While CSG successfully mitigates the issue of excessive computational resource consumption in GSG, there are still avenues for further exploration. 
The definition of separability can be expanded to guide problem decomposition more effectively. 
Additionally, we can explore more generalized mathematical optimization-based decomposition for LSGO problems. 
Lastly, leveraging the computational power of deep learning to explore problem structures represents a potential research direction.

\footnotesize

\bibliography{TAI_template}

\begin{thebibliography}{10}
\providecommand{\url}[1]{#1}
\csname url@samestyle\endcsname
\providecommand{\newblock}{\relax}
\providecommand{\bibinfo}[2]{#2}
\providecommand{\BIBentrySTDinterwordspacing}{\spaceskip=0pt\relax}
\providecommand{\BIBentryALTinterwordstretchfactor}{4}
\providecommand{\BIBentryALTinterwordspacing}{\spaceskip=\fontdimen2\font plus
\BIBentryALTinterwordstretchfactor\fontdimen3\font minus \fontdimen4\font\relax}
\providecommand{\BIBforeignlanguage}[2]{{%
\expandafter\ifx\csname l@#1\endcsname\relax
\typeout{** WARNING: IEEEtran.bst: No hyphenation pattern has been}%
\typeout{** loaded for the language `#1'. Using the pattern for}%
\typeout{** the default language instead.}%
\else
\language=\csname l@#1\endcsname
\fi
#2}}
\providecommand{\BIBdecl}{\relax}
\BIBdecl

\bibitem{LSGO1}
M.~N. Omidvar, X.~Li, and X.~Yao, ``A review of population-based metaheuristics for large-scale black-box global optimization—{P}art {I},'' \emph{IEEE Transactions on Evolutionary Computation}, vol.~26, no.~5, pp. 802--822, 2022.

\bibitem{DDG}
J.-Y. Li, Z.-H. Zhan, K.~C. Tan, and J.~Zhang, ``Dual differential grouping: A more general decomposition method for large-scale optimization,'' \emph{IEEE Transactions on Cybernetics}, vol.~53, no.~6, pp. 3624--3638, 2023.

\bibitem{EO}
M.~Bhattacharya, R.~Islam, and J.~Abawajy, ``Evolutionary optimization: A big data perspective,'' \emph{J. Netw. Comput. Appl.}, vol.~59, pp. 416--426, Jan. 2016.

\bibitem{Traffic}
Y.~Cao and D.~Sun, ``A parallel computing framework for large-scale air traffic flow optimization,'' \emph{IEEE Transactions on Intelligent Transportation Systems}, vol.~13, no.~4, pp. 1855--1864, 2012.

\bibitem{Handbook}
T.~B{\"a}ck, D.~B. Fogel, and Z.~Michalewicz, ``Handbook of evolutionary computation,'' \emph{Release}, vol.~97, no.~1, p.~B1, 1997.

\bibitem{Meta}
S.~Mahdavi, M.~E. Shiri, and S.~Rahnamayan, ``Metaheuristics in large-scale global continues optimization: A survey,'' \emph{Information Sciences}, vol. 295, pp. 407--428, Feb. 2015.

\bibitem{SHADE-ILS}
D.~Molina, A.~LaTorre, and F.~Herrera, ``Shade with iterative local search for large-scale global optimization,'' in \emph{2018 IEEE Congress on Evolutionary Computation (CEC)}, 2018, pp. 1--8.

\bibitem{MOS}
A.~LaTorre, S.~Muelas, and J.-M. Peña, ``Large scale global optimization: Experimental results with mos-based hybrid algorithms,'' in \emph{2013 IEEE Congress on Evolutionary Computation}, 2013, pp. 2742--2749.

\bibitem{CSO}
R.~Cheng and Y.~Jin, ``A competitive swarm optimizer for large scale optimization,'' \emph{IEEE Transactions on Cybernetics}, vol.~45, no.~2, pp. 191--204, 2015.

\bibitem{CC1}
M.~A. Potter and K.~A. De~Jong, ``A cooperative coevolutionary approach to function optimization,'' in \emph{International Conference on Parallel Problem Solving from Nature}.\hskip 1em plus 0.5em minus 0.4em\relax Springer, 1994, pp. 249--257.

\bibitem{CC4}
Z.~Yang, K.~Tang, and X.~Yao, ``Large scale evolutionary optimization using cooperative coevolution,'' \emph{Information Sciences}, vol. 178, no.~15, pp. 2985--2999, 2008.

\bibitem{DG}
M.~N. Omidvar, X.~Li, Y.~Mei, and X.~Yao, ``Cooperative co-evolution with differential grouping for large scale optimization,'' \emph{IEEE Transactions on Evolutionary Computation}, vol.~18, no.~3, pp. 378--393, 2014.

\bibitem{KS}
F.~van~den Bergh and A.~Engelbrecht, ``A cooperative approach to particle swarm optimization,'' \emph{IEEE Transactions on Evolutionary Computation}, vol.~8, no.~3, pp. 225--239, 2004.

\bibitem{RDG}
Y.~Sun, M.~Kirley, and S.~K. Halgamuge, ``A recursive decomposition method for large scale continuous optimization,'' \emph{IEEE Transactions on Evolutionary Computation}, vol.~22, no.~5, pp. 647--661, 2018.

\bibitem{EDG}
A.~Kumar, S.~Das, and R.~Mallipeddi, ``An efficient differential grouping algorithm for large-scale global optimization,'' \emph{IEEE Transactions on Evolutionary Computation}, pp. 1--1, 2022.

\bibitem{CCVIL}
W.~Chen, T.~Weise, Z.~Yang, and K.~Tang, ``Large-scale global optimization using cooperative coevolution with variable interaction learning,'' in \emph{International Conference on Parallel Problem Solving from Nature}.\hskip 1em plus 0.5em minus 0.4em\relax Springer, Nov. 2010, pp. 300--309.

\bibitem{FVIL}
H.~Ge, L.~Sun, X.~Yang, S.~Yoshida, and Y.~Liang, ``Cooperative differential evolution with fast variable interdependence learning and cross-cluster mutation,'' \emph{Applied Soft Computing}, vol.~36, pp. 300--314, 2015.

\bibitem{IRRG}
M.~M. Komarnicki, M.~W. Przewozniczek, H.~Kwasnicka, and K.~Walkowiak, ``Incremental recursive ranking grouping for large-scale global optimization,'' \emph{IEEE Transactions on Evolutionary Computation}, vol.~27, no.~5, pp. 1498--1513, 2023.

\bibitem{GSG}
M.~Chen, W.~Du, Y.~Tang, Y.~Jin, and G.~G. Yen, ``A decomposition method for both additively and nonadditively separable problems,'' \emph{IEEE Transactions on Evolutionary Computation}, vol.~27, no.~6, pp. 1720--1734, 2023.

\bibitem{SVG}
A.~Chen, Z.~Ren, M.~Wang, Y.~Liang, H.~Liu, and W.~Du, ``A surrogate-assisted variable grouping algorithm for general large-scale global optimization problems,'' \emph{Information Sciences}, vol. 622, pp. 437--455, Apr. 2023.

\bibitem{ERDG}
M.~Yang, A.~Zhou, C.~Li, and X.~Yao, ``An efficient recursive differential grouping for large-scale continuous problems,'' \emph{IEEE Transactions on Evolutionary Computation}, vol.~25, no.~1, pp. 159--171, 2021.

\bibitem{DG2}
M.~N. Omidvar, M.~Yang, Y.~Mei, X.~Li, and X.~Yao, ``{DG2}: A faster and more accurate differential grouping for large-scale black-box optimization,'' \emph{IEEE Transactions on Evolutionary Computation}, vol.~21, no.~6, pp. 929--942, 2017.

\bibitem{MDG}
X.~Ma, Z.~Huang, X.~Li, L.~Wang, Y.~Qi, and Z.~Zhu, ``Merged differential grouping for large-scale global optimization,'' \emph{IEEE Transactions on Evolutionary Computation}, vol.~26, no.~6, pp. 1439--1451, 2022.

\bibitem{EADG}
A.~Chen, Z.~Ren, W.~Guo, Y.~Liang, and Z.~Feng, ``An efficient adaptive differential grouping algorithm for large-scale black-box optimization,'' \emph{IEEE Transactions on Evolutionary Computation}, vol.~27, no.~3, pp. 475--489, 2023.

\bibitem{XDG}
Y.~Sun, M.~Kirley, and S.~K. Halgamuge, ``Extended differential grouping for large scale global optimization with direct and indirect variable interactions,'' in \emph{Proceedings of the 2015 Annual Conference on Genetic and Evolutionary Computation}, 2015, pp. 313--320.

\bibitem{GDG}
Y.~Mei, M.~N. Omidvar, X.~Li, and X.~Yao, ``A competitive divide-and-conquer algorithm for unconstrained large-scale black-box optimization,'' \emph{ACM Transactions on Mathematical Software (TOMS)}, vol.~42, no.~2, pp. 1--24, 2016.

\bibitem{RDG3}
Y.~Sun, X.~Li, A.~Ernst, and M.~N. Omidvar, ``Decomposition for large-scale optimization problems with overlapping components,'' in \emph{2019 IEEE Congress on Evolutionary Computation (CEC)}.\hskip 1em plus 0.5em minus 0.4em\relax IEEE, 2019, pp. 326--333.

\bibitem{GDD}
X.~Zhang, B.-W. Ding, X.-X. Xu, J.-Y. Li, Z.-H. Zhan, P.~Qian, W.~Fang, K.-K. Lai, and J.~Zhang, ``Graph-based deep decomposition for overlapping large-scale optimization problems,'' \emph{IEEE Transactions on Systems, Man, and Cybernetics: Systems}, vol.~53, no.~4, pp. 2374--2386, 2023.

\bibitem{ORDG}
J.~Blanchard, C.~Beauthier, and T.~Carletti, ``Investigating overlapped strategies to solve overlapping problems in a cooperative co-evolutionary framework,'' in \emph{International Conference on Optimization and Learning}.\hskip 1em plus 0.5em minus 0.4em\relax Springer, 2021, pp. 254--266.

\bibitem{DCC}
X.-Y. Zhang, Y.-J. Gong, Y.~Lin, J.~Zhang, S.~Kwong, and J.~Zhang, ``Dynamic cooperative coevolution for large scale optimization,'' \emph{IEEE Transactions on Evolutionary Computation}, vol.~23, no.~6, pp. 935--948, 2019.

\bibitem{CEC2010}
\BIBentryALTinterwordspacing
K.~Tang, X.~Li, P.~Suganthan, Z.~Yang, and T.~Weise, ``Benchmark functions for the {CEC}'2010 special session and competition on large-scale global optimization,'' 2010. [Online]. Available: \url{https://titan.csit.rmit.edu.au/~e46507/publications/lsgo-cec10.pdf}
\BIBentrySTDinterwordspacing

\bibitem{CEC2013}
X.~Li, K.~Tang, M.~N. Omidvar, Z.~Yang, K.~Qin, and H.~China, ``Benchmark functions for the {CEC}'2013 special session and competition on large-scale global optimization,'' \emph{Gene}, vol.~7, no.~33, p.~8, 2013.

\bibitem{RDG2}
Y.~Sun, M.~N. Omidvar, M.~Kirley, and X.~Li, ``Adaptive threshold parameter estimation with recursive differential grouping for problem decomposition,'' in \emph{Proceedings of the genetic and evolutionary computation conference}, 2018, pp. 889--896.

\bibitem{CEC}
M.~N. Omidvar, X.~Li, and K.~Tang, ``Designing benchmark problems for large-scale continuous optimization,'' \emph{Information Sciences}, vol. 316, pp. 419--436, Sep. 2015.

\bibitem{real1}
H.~Marouani and O.~Al-Mutiri, ``Optimization of reliability-redundancy allocation problems: A review of the evolutionary algorithms.'' \emph{Computers, Materials \& Continua}, vol.~71, no.~1, pp. 537--571, 2022.

\bibitem{real2}
B.~Rey and J.-C. Rochet, ``Health and wealth: How do they affect individual preferences?'' \emph{The Geneva Papers on Risk and Insurance Theory}, vol.~29, pp. 43--54, June 2004.

\bibitem{real3}
R.~Chenavaz, ``Dynamic pricing, product and process innovation,'' \emph{European Journal of Operational Research}, vol. 222, no.~3, pp. 553--557, 2012.

\bibitem{SANSDE}
Z.~Yang, K.~Tang, and X.~Yao, ``Self-adaptive differential evolution with neighborhood search,'' in \emph{2008 IEEE Congress on Evolutionary Computation}.\hskip 1em plus 0.5em minus 0.4em\relax IEEE, 2008, pp. 1110--1116.

\end{thebibliography}

\end{document}